\newtheorem{theorem}{Theorem}[section]
\newtheorem{lemma}[theorem]{Lemma}
\newtheorem{corollary}[theorem]{Corollary}
\newtheorem{definition}[theorem]{Definition}
\renewenvironment{proof}{{\noindent\bf Proof.}}{\hfill $\Box$\par\vskip3mm}
\begin{document}

\def\reals{{\mathbb R}}
 \def\ch{{\mathcal H}}
 \def\cA{{\mathcal A}}
 \def\cD{{\mathcal D}}
 \def\cK{{\mathcal K}}
 \def\cC{{\mathcal C}}
 \def\cN{{\mathcal N}}
 \def\cR{{\mathcal R}}
 \def\cS{{\mathcal S}}
 \def\cT{{\mathcal T}}
 \def\cV{{\mathcal V}}
 \def\ta{{\mathcal T}_{\subset}}
 \def\cI{{\mathcal I}}
 \def\bC{{\bf C}}
 \def\axis{{\bf A}}
 \def\fibr{{\bf H}}
 \def\ba{{\bf a}}
 \def\bb{{\bf b}}
 \def\bc{{\bf c}}
 \def\be{{\bf e}}
 \def\d{{\delta}} 
 \def\ci{{\circ}} 
 \def\e{{\epsilon}} 
 \def\l{{\lambda}} 
 \def\L{{\Lambda}} 
 \def\m{{\mu}} 
 \def\n{{\nu}} 
 \def\o{{\omega}} 
 \def\s{{\sigma}} 
 \def\v{{\varphi}} 
 \def\a{{\alpha}} 
 \def\b{{\beta}} 
 \def\p{{\partial}} 
 \def\r{{\rho}} 
 \def\ra{{\rightarrow}} 
 \def\lra{{\longrightarrow}} 
 \def\g{{\gamma}} 
 \def\D{{\Delta}} 
 \def\La{{\Leftarrow}} 
 \def\Ra{{\Rightarrow}} 
 \def\x{{\xi}} 
 \def\c{{\mathbb C}} 
 \def\z{{\mathbb Z}} 
 \def\2{{\mathbb Z_2}} 
 \def\q{{\mathbb Q}} 
 \def\t{{\tau}} 
 \def\u{{\upsilon}} 
 \def\th{{\theta}} 
 \def\la{{\leftarrow}} 
 \def\lla{{\longleftarrow}} 
 \def\da{{\downarrow}} 
 \def\ua{{\uparrow}} 
 \def\nwa{{\nwtarrow}} 
 \def\swa{{\swarrow}} 
 \def\nea{{\netarrow}} 
 \def\sea{{\searrow}} 
 \def\hla{{\hookleftarrow}} 
 \def\hra{{\hookrightarrow}} 
 \def\sl{{SL(2,\mathbb C)}} 
 \def\ps{{PSL(2,\mathbb C)}} 
 \def\qed{{\hfill$\diamondsuit$}} 
 \def\pf{{\noindent{\bf Proof.\hspace{2mm}}}} 
 \def\ni{{\noindent}} 
 \def\sm{{{\mbox{\tiny M}}}} 
 \def\sc{{{\mbox{\tiny C}}}}

\title{Studying uniform thickness I:\\Legendrian simple iterated torus knots}

\author{Douglas J. LaFountain}
\keywords{Legendrian, convex, uniform thickness property}
\thanks{2000 \textit{Mathematics Subject Classification}. Primary 57M25, 57R17;
Secondary 57M50}

\begin{abstract}
We prove that the class of topological knot types that are both Legendrian simple and satisfy the uniform thickness property (UTP) is closed under cabling.  An immediate application is that all iterated cabling knot types that begin with negative torus knots are Legendrian simple.  We also examine, for arbitrary numbers of iterations, iterated cablings that begin with positive torus knots, and establish the Legendrian simplicity of large classes of these knot types, many of which also satisfy the UTP.  In so doing we obtain new necessary conditions for both the failure of the UTP and Legendrian non-simplicity in the class of iterated torus knots, including specific conditions on knot types. 
\end{abstract}

\maketitle

\section{Introduction}

In this paper we begin a general study of the {\em uniform thickness property} (UTP) in the context of iterated torus knots that are embedded in $S^3$ with the standard tight contact structure.  Our goal in this study will be to determine the extent to which iterated torus knot types fail to satisfy the UTP, and the extent to which this failure leads to cablings that are Legendrian or transversally non-simple.  The specific goal of this note is to address both questions by establishing new necessary conditions for the failure of the UTP, as well as new necessary conditions for slopes of cablings that are Legendrian non-simple.  In the process we will show that, in some sense, most iterated torus knot types are Legendrian simple, and many satisfy the UTP, including many iterated cablings that begin with knots which fail the UTP.

Specifically, we will begin by showing that the class of knots that are both Legendrian simple and satisfy the UTP is closed under cabling, and hence all iterated cablings that begin with negative torus knots are Legendrian simple.  We will then study, for arbitrary numbers of iterations, iterated cablings that begin with positive torus knots, and demonstrate the Legendrian simplicity of many of these knot types, some of which also satisfy the UTP.  Our analysis will result in a precise class of iterated torus knot types that may fail the UTP, as well as the identification of many solid tori representatives that may fail to thicken.  We will also obtain a precise class of iterated torus knots that may be Legendrian non-simple.  A forthcoming note, {\em Studying uniform thickness II}, will then more directly address the related problems of determining whether these two classes indeed fail the UTP and are Legendrian non-simple.  

To bring the above goals into focus, we recall the definition of the {\em uniform thickness property} as given by Etnyre and Honda \cite{[EH1]}.  For a knot type $K$, define the {\em contact width} of $K$ to be

\begin{equation}
w(K)=\textrm{sup}\frac{1}{\textrm{slope}(\Gamma_{\partial N})}
\end{equation}

In this equation the $N$ are solid tori having representatives of $K$ as their cores, and $\textrm{slope}(\Gamma_{\partial N})$ refers to the slope of the {\em dividing curves} on the convex torus $\partial N$.  Slopes are measured using the preferred framing coming from a Seifert surface for $K$, and slopes are calculated so that the longitude has slope $\infty$; the supremum is taken over all solid tori $N$ representing $K$ where $\partial N$ is convex.  Any knot type $K$ satisfies the inequality $\overline{tb}(K) \leq w(K) \leq \overline{tb}(K) + 1$, where $\overline{tb}$ is the maximal Thurston-Bennequin number for $K$.

A knot type $K$ satisfies the UTP if the following hold:

\begin{itemize}
\item[1.] $\overline{tb}(K)=w(K)$.
\item[2.] Every solid torus $N$ representing $K$ can be thickened to a standard neighborhood of a maximal $tb$ Legendrian knot.
\end{itemize}

Using this definition, Etnyre and Honda identified necessary conditions for the existence of Legendrian non-simple iterated torus knot types \cite{[EH1]}.  Specifically, they showed that if all iterated torus knots were to satisfy the UTP, then they would all be Legendrian simple;  hence if some iterated torus knot fails to be Legendrian simple, then there must exist an iterated torus knot which fails the UTP.  They subsequently established that the $(2,3)$ torus knot fails the UTP and indeed has a cabling which is Legendrian non-simple, namely the $((2,3),(2,3))$ iterated torus knot.  They also established, for arbitrary numbers of iterations, iterated torus knots that are Legendrian simple, where at each iteration the knot type satisfies the UTP, and cabling fractions $P/q$ are less than the contact width.

In this note, we extend Etnyre and Honda's work; we begin by proving the following theorem:

\begin{theorem}
\label{UTP theorem}
Let $K$ be a topological knot type.  If $K$ is Legendrian simple and satisfies the UTP, then all of its cablings are Legendrian simple and satisfy the UTP.
\end{theorem}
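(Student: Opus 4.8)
The plan is to convert every question about Legendrian representatives of a cable into a question about solid tori representing $K$, and then feed in the two hypotheses on $K$.

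\emph{Step 1: a dictionary between Legendrian cables and convex tori representing $K$.} Write $K_{(p,q)}$ for the $(p,q)$-cable, parametrize slopes on tori $\partial N$ (with $N$ a solid torus representing $K$) using the Seifert framing of $K$, so that the cabling curve has a fixed slope $c$. I would first show that any Legendrian representative $L$ of $K_{(p,q)}$ can be Legendrian isotoped so that it is a Legendrian ruling curve of slope $c$ on a convex torus $\partial N$ with $N$ representing $K$: take a standard contact neighborhood $S$ of $L$, note its core is a $(p,q)$-cable of a representative of $K$ and hence, after isotopy, lies on a convex torus parallel to the boundary of some solid torus $N$ representing $K$, and then use the Legendrian realization principle together with Honda's classification of tight contact structures on thickened tori (and edge-rounding) to realize $L$ itself as a ruling curve on $\partial N$. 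Conversely, any Legendrian ruling curve of slope $c$ on the boundary of a convex solid torus representing $K$ is a Legendrian representative of $K_{(p,q)}$, with $tb$ and $r$ determined by the dividing slope and number of dividing curves of $\partial N$. This gives a correspondence, up to Legendrian isotopy, between Legendrian representatives of $K_{(p,q)}$ and pairs (convex torus $\partial N$ representing $K$; ruling curve of slope $c$ on it). Since $K$ satisfies the UTP, every such $N$ can be enlarged to the standard neighborhood $N_0$ of a maximal-$tb$ Legendrian representative $L_0$ of $K$, so one may always take $N$ to have two dividing curves of slope $\tfrac{1}{\overline{tb}(K)}$ and work inside the resulting product regions $T^2\times I$.

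\emph{Step 2: the contact width and $\overline{tb}$ of $K_{(p,q)}$.} Using Step 1 I would compute $\overline{tb}(K_{(p,q)})$ by maximizing the Thurston--Bennequin number over ruling curves of slope $c$ on convex tori representing $K$; the maximum is attained on $\partial N_0$ and is given by an explicit formula in $p$, $q$ and $w(K)=\overline{tb}(K)$, with cases according to the position of $c$ relative to $\tfrac{1}{\overline{tb}(K)}$ in the Farey graph. For the width, let $M$ be any solid torus representing $K_{(p,q)}$; since $K_{(p,q)}$ is a satellite of $K$ with companion torus, one may isotope $M$ into a solid torus $N$ representing $K$, then (by the UTP for $K$) into $N_0$, and then bound the dividing slope of $\partial M$ using Honda's classification applied to $N_0\setminus \nu(\mathrm{core}(M))\cong T^2\times I$. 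This gives $w(K_{(p,q)})\le\overline{tb}(K_{(p,q)})$, and since the reverse inequality always holds we get equality, which is condition (1) of the UTP. For condition (2), run the same argument: given $M$ representing $K_{(p,q)}$, isotope it into $N_0$; a convex torus of slope $\tfrac{1}{\overline{tb}(K_{(p,q)})}$ exists in $N_0$ by the Farey-graph computation, the region between it and $\partial M$ is a tight $T^2\times I$, and hence $M$ thickens to the standard neighborhood of a maximal-$tb$ Legendrian representative of $K_{(p,q)}$.

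\emph{Step 3: Legendrian simplicity of $K_{(p,q)}$.} Applying the thickening of Step 2 to a standard neighborhood of a Legendrian $L$ representing $K_{(p,q)}$ with $tb(L)<\overline{tb}(K_{(p,q)})$ shows that this neighborhood can be enlarged across a basic slice, hence that $L$ destabilizes; so every representative destabilizes to one of maximal $tb$. It remains to see that a maximal-$tb$ representative with a prescribed rotation number is unique up to Legendrian isotopy. Given two such, $L_1$ and $L_2$, Step 1 places each on the boundary of the standard neighborhood $N_0^{(i)}$ of a maximal-$tb$ Legendrian representative $L_0^{(i)}$ of $K$; since $K$ is Legendrian simple, $L_0^{(1)}$ and $L_0^{(2)}$ are Legendrian isotopic, so after a contact isotopy $N_0^{(1)}=N_0^{(2)}=:N_0$, and then $L_1,L_2$ are two ruling curves of slope $c$ on $\partial N_0$ with the same $tb$ and $r$, hence Legendrian isotopic inside $N_0$ by Honda's classification of Legendrian curves on convex tori (together with the classification of tight structures on the relevant solid tori). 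Combining uniqueness of the maximal-$tb$ representatives, the fact that every representative destabilizes to a maximal one, and the standard fact that positive and negative stabilizations commute up to Legendrian isotopy, we conclude that the Legendrian isotopy type of any representative of $K_{(p,q)}$ is determined by $(tb,r)$, i.e.\ $K_{(p,q)}$ is Legendrian simple and, by Step 2, satisfies the UTP. Since the conclusion re-establishes the hypotheses, iterating yields the statement for all (iterated) cablings.

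I expect the technical heart to be Step 1 together with the Farey-graph bookkeeping inside Step 2: determining exactly which dividing slopes are realizable in the various thickened tori cut out of $N_0$, keeping track of the signs of the basic slices (which govern whether a destabilization is positive or negative, hence the rotation-number formulas), and handling uniformly the two regimes of the cabling slope relative to $w(K)$, including degenerate cases such as integral cabling slopes. Once those convex-surface-theoretic facts are in place, the remainder is routine bookkeeping plus direct appeals to the hypotheses on $K$ and to Honda's classification theorems.
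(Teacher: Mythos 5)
Your Steps~1 and~3, together with the $P/q<w(K)$ regime of Step~2, essentially reproduce what the paper simply quotes from Etnyre--Honda (Theorems 1.3 and 3.2 of \cite{[EH1]}); the genuinely new content of the theorem is condition (2) of the UTP for cabling slopes $P/q>w(K)$, and there your argument has a real gap. You propose to thicken an arbitrary solid torus $M$ representing $K_{(P,q)}$ by putting it inside the standard neighborhood $N_0$ of a maximal-$tb$ representative of $K$ and then applying Honda's classification to ``the region between $\partial M$ and a convex torus of slope $1/\overline{tb}(K_{(P,q)})$,'' which you identify with $T^2\times I$ (likewise your claim $N_0\setminus\nu(\mathrm{core}(M))\cong T^2\times I$). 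This identification is false: since $q\geq 2$, the complement of the cable in $N_0$ is a cable space, Seifert fibered over the annulus with an exceptional fiber of order $q$, not a thickened torus, and $\partial M$ is not isotopic in that complement to tori parallel to $\partial N_0$. So the Farey-graph/basic-slice bookkeeping for thickened tori does not apply there, and the existence inside $N_0$ of a convex torus of the correct slope that contains $M$ and is isotopic to $\partial M$ --- which is exactly what a thickening of $M$ means --- is precisely what remains to be proved. (A smaller point: the UTP requires thickening $M$ itself, so ``isotope $M$ into $N_0$'' must be replaced by an honest containment; the paper arranges this with a convex cabling annulus $\mathcal{A}$ from $\partial M$ to itself, producing $\widehat{N}\supset M$ representing $K$, which is then thickened using the UTP for $K$.)

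The missing ingredient is the paper's excavation argument. After thickening $\widehat{N}$ to the standard neighborhood $N$, one takes a Legendrian core curve $L$ of $K$ in $\widehat{N}\setminus M$ whose $tb$ is maximized in the complement of $M$, joins $\partial N(L)$ to $\partial N$ by a convex annulus $\widetilde{\mathcal{A}}$ with $(P,q)$-ruling boundary, and uses the Imbalance Principle together with Lemma 4.4 of \cite{[H]} --- this is exactly where the hypothesis $P/q>w(K)$ enters --- to force $tb(L)=\overline{tb}(K)$ and $\widetilde{\mathcal{A}}$ standard convex. Then $N\setminus\bigl(N(L)\cup N(\widetilde{\mathcal{A}})\bigr)$ is a solid torus containing $M$, and an edge-rounding computation shows its boundary slope is $1/\overline{tb}(K_{(P,q)})$ with two dividing curves, i.e.\ it is the required standard neighborhood of a maximal-$tb$ Legendrian cable. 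Without this construction (or some substitute locating a maximal-$tb$ companion core in the complement of the given solid torus), your proposal does not establish the UTP for cables with $P/q>w(K)$, which is the only part of the statement not already contained in \cite{[EH1]}.
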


Most of the content of this theorem was proved by Etnyre and Honda in Theorems 1.1 and 1.3 in \cite{[EH1]}; we prove the satisfaction of the UTP for cabling fractions $P/q$ that are greater than the contact width.  As an immediate consequence, using the fact that negative torus knots are Legendrian simple and satisfy the UTP \cite{[EH1],[EH2]}, we have the following result:

\begin{corollary}
\label{Cablings of negative torus knots}
All iterated cabling knot types that begin with negative torus knots are Legendrian simple; that is, if $K_r = ((P_1,q_1),...,(P_r,q_r))$ is an iterated torus knot type where $(P_1,q_1)$ is a negative torus knot, then $K_r$ is Legendrian simple.
\end{corollary}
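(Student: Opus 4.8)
The plan is to deduce Corollary~\ref{Cablings of negative torus knots} from Theorem~\ref{UTP theorem} by induction on the number of iterations $r$. For the base case $r = 1$ I would invoke the results of Etnyre and Honda (\cite{[EH1]}, \cite{[EH2]}) to the effect that every negative torus knot type is Legendrian simple and satisfies the UTP; since the hypothesis says $K_1 = (P_1,q_1)$ is a negative torus knot, this is precisely the base case. The one small thing to verify here is that the framing and orientation conventions under which Etnyre--Honda established the UTP for negative torus knots coincide with those used in the present paper, so that the cited statement applies without modification.

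For the inductive step, suppose $r \ge 2$ and that $K_{r-1} = ((P_1,q_1),\dots,(P_{r-1},q_{r-1}))$ is Legendrian simple and satisfies the UTP. By the definition of an iterated torus knot, $K_r$ is a cabling of $K_{r-1}$: one takes a solid torus neighborhood of a representative of $K_{r-1}$ and the $(P_r,q_r)$ curve on its boundary, with $K_r$ the resulting knot type. Applying Theorem~\ref{UTP theorem} with $K = K_{r-1}$ --- legitimate by the inductive hypothesis --- gives at once that every cabling of $K_{r-1}$, and in particular $K_r$, is Legendrian simple and satisfies the UTP. This completes the induction, so $K_r$ is Legendrian simple for every $r$, which is the assertion of the corollary; as a bonus, $K_r$ also satisfies the UTP.

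Thus at the level of the corollary there is essentially no obstacle beyond the bookkeeping of the induction: all of the substance is packaged into Theorem~\ref{UTP theorem}, which I am allowed to assume. For orientation I would note where the work in that theorem actually lies. The claims that cablings of a Legendrian-simple UTP knot type $K$ are Legendrian simple, and satisfy the UTP for cabling slopes $P/q$ less than the contact width $w(K)$, are already in \cite{[EH1]}; the genuinely new point is establishing both conditions in the definition of the UTP for cabling slopes $P/q > w(K)$. I would expect this to be handled by convex surface theory: a solid torus representing the $(P,q)$-cable sits inside a neighborhood $N$ of a representative of $K$; the UTP for $K$ forces $N$ to thicken to a standard neighborhood of a maximal $tb$ Legendrian representative of $K$; and one then reads off the dividing slopes on the cabling torus to pin down the contact width of the cable and to thicken the cable's solid torus to a standard neighborhood. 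Making the slope arithmetic precise in the regime $P/q > w(K)$, where the cabling curve is steeper than anything visible on the maximal convex neighborhood of $K$, is the step I would expect to be the main obstacle --- but it is internal to Theorem~\ref{UTP theorem}, not to the corollary.
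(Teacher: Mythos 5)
Your proposal is correct and matches the paper's reasoning: the corollary is deduced exactly as you describe, by noting that negative torus knots are Legendrian simple and satisfy the UTP (Etnyre--Honda) and then iterating Theorem \ref{UTP theorem}, which says this class is closed under cabling. The paper treats this as an immediate consequence rather than writing out the induction, but your bookkeeping is the same argument.
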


We then undertake an analysis of iterated cablings that begin with positive torus knots, and identify, for arbitrary numbers of iterations, Legendrian simple classes of such iterated torus knots.  In order to obtain a precise statement of these other results, we will first need to recall and introduce some terminology.  However, at this point the reader may wish to look ahead to Figure \ref{fig:SchematicNewestB2}, where in graphical form we combine Etnyre and Honda's results with ours to provide a summary of what is known concerning the uniform thickness and Legendrian classification of iterated torus knots.

Recall that for Legendrian knots embedded in $S^3$ endowed with the standard tight contact structure, there are two classical invariants of Legendrian isotopy classes, namely the Thurston-Bennequin number, $tb$, and the rotation number, $r$.  For a given topological knot type, we can represent Legendrian isotopy classes by points on a grid whose horizontal axis plots values of $r$ and whose vertical axis plots values of $tb$.  This plot takes the visual form of a {\em Legendrian mountain range}.  For a given topological knot type, if the ordered pair $(r,tb)$ completely determines the Legendrian isotopy classes, then that knot type is said to be {\em Legendrian simple}.  Previous examples of Legendrian simple knot types include the unknot \cite{[EF]}, as well as torus knots and the figure eight knot \cite{[EH2]}.

{\em Iterated torus knots}, as topological knot types, can be defined recursively.  Let 1-iterated torus knots be simply torus knots $(P_1,q_1)$ with $P_1$ and $q_1$ co-prime nonzero integers, and $|P_1|, q_1 > 1$.  Here $P_1$ is the algebraic intersection with a longitude, and $q_1$ is the algebraic intersection with a meridian in the preferred framing for a torus representing the unknot.  Then for each $(P_1,q_1)$ torus knot, take a solid torus regular neighborhood $N((P_1,q_1))$; the boundary of this is a torus, and given a framing we can describe simple closed curves on that torus as co-prime pairs $(P_2,q_2)$, with $q_2 > 1$.  In this way we obtain all 2-iterated torus knots, which we represent as ordered pairs, $((P_1,q_1),(P_2,q_2))$.  Recursively, suppose the $(r-1)$-iterated torus knots are defined; we can then take regular neighborhoods of all of these, choose a framing, and form the $r$-iterated torus knots as ordered $r$-tuples $((P_1,q_1),...,(P_{r-1},q_{r-1}),(P_r,q_r))$, again with $P_r$ and $q_r$ co-prime, and $q_r > 1$.

For ease of notation, if we are looking at a general $r$-iterated torus knot type, we will refer to it as $K_r$; a Legendrian representative will usually be written as $L_r$.  Note that we will use the letter $r$ both for the rotation number and as an index for our iterated torus knots; context will distinguish between the two uses.

We will study iterated torus knots using two framings.  The first is the standard framing for a torus, where the meridian bounds a disc inside the solid torus, and we use the preferred longitude which bounds a Seifert surface in the complement of the solid torus.  We will refer to this framing as $\mathcal{C}$.  The second framing is a non-standard framing using a different longitude that comes from the cabling torus.  More precisely, to identify this non-standard longitude on $\partial N(K_r)$, we first look at $K_r$ as it is embedded in $\partial N(K_{r-1})$.  We take a small neighborhood $N(K_r)$ such that $\partial N(K_r)$ intersects $\partial N(K_{r-1})$ in two parallel simple closed curves.  These curves are longitudes on $\partial N(K_r)$ in this second framing, which we will refer to as $\mathcal{C'}$.  Note that this corresponds to the $\mathcal{C'}$ framing in \cite{[EH1]}, and is well-defined for any cabled knot type.  Moreover, for purpose of calculations there is an easy way to change between the two framings, which is presented in \cite{[EH1]} and which we will review in the body of this note.

Given a simple closed curve $(\mu, \lambda)$ on a torus, measured in some framing as having $\mu$ meridians and $\lambda$ longitudes, we will say this curve has slope of $\lambda/\mu$; i.e., longitudes over meridians.  Therefore we will refer to the longitude in the $\mathcal{C'}$ framing as $\infty^\prime$, and the longitude in the $\mathcal{C}$ framing as $\infty$.  The meridian in both framings will have slope $0$.  This way of representing slopes corresponds to that in \cite{[EH1]}; in short, slopes are the reciprocals of cabling fractions $\mu/\lambda$.

A new convention we will be using is that meridians in the standard $\mathcal{C}$ framing, that is, algebraic intersection with $\infty$, will be denoted by upper-case $P$.  On the other hand, meridians in the non-standard $\mathcal{C}'$ framing, that is, algebraic intersection with $\infty'$, will be denoted by lower-case $p$.

Given an iterated torus knot type $K_r = ((p_1,q_1),...,(p_r,q_r))$ where the $p_i$'s are measured in the $\mathcal{C}'$ framing, we define two quantities, whose meaning will be revealed in the body of this note.  The two quantities are:

\begin{equation}
\label{ArBr}
\displaystyle A_r := \sum_{\alpha=1}^{r}p_\alpha \prod_{\beta=\alpha+1}^{r}q_\beta \prod_{\beta=\alpha}^{r}q_\beta \ \ \ \ \ \ \ \ B_r := \sum_{\alpha=1}^r \left(p_\alpha \prod_{\beta=\alpha+1}^r q_\beta \right) + \prod_{\alpha=1}^r q_\alpha
\end{equation}

Note here we use a convention that $\displaystyle\prod_{\beta=r+1}^{r}q_\beta := 1$.  Also, if we restrict to the first $i$ iterations, that is, to $K_i = ((p_1,q_1),...,(p_i,q_i))$, we have an associated $A_i$ and $B_i$.  For example, $\displaystyle A_i := \sum_{\alpha=1}^{i}p_\alpha \prod_{\beta=\alpha+1}^{i}q_\beta \prod_{\beta=\alpha}^{i}q_\beta$.

Finally, for convenience in stating our theorems, we will define a particular class of iterated torus knot types, each member of which we will denote by $\breve{K}_r$:

\begin{definition}
{\em $\breve{K}_r = ((p_1,q_1),..., (p_i,q_i),...,(p_r,q_r))$ is an $r$-iterated torus knot type, where we require that $r \geq 1$, $q_i > 1$ for all $i$, $p_1 > 1$, and for $i \geq 1$ we have $q_{i+1}/p_{i+1} \notin (-1/B_i,0)$; at each iteration we use the $\mathcal{C'}$ framing.}
\end{definition}

We will show that the following is an equivalent definition for $\breve{K}_r$ in the $\mathcal{C}$ framing:  form an iterated torus knot by beginning with a positive $(P_1,q_1)$ torus knot, and then at each iteration take cabling fractions $P_{i+1}/q_{i+1}$ greater than $w(K_i)$.  Note also that for $\breve{K}_r$ we will show that $A_r > B_r > 0$.
 
We can now state our remaining results; our first is that the $\breve{K}_r$ are Legendrian simple:

\begin{theorem}
\label{main theorem}
Each $\breve{K}_r$ is Legendrian simple, and has a Legendrian mountain range with a single peak at $\overline{tb}=A_r-B_r=-\chi(\breve{K}_r)$ and $r=0$.
\end{theorem}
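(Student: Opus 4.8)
The plan is to induct on the number of iterations $r$, carrying the slightly stronger hypothesis that $\breve{K}_r$ is Legendrian simple, has a Legendrian mountain range with a single peak located at $r=0$ and $tb=\overline{tb}(\breve{K}_r)=A_r-B_r=-\chi(\breve{K}_r)$, and in addition satisfies $w(\breve{K}_r)=\overline{tb}(\breve{K}_r)$; thus condition~1 of the UTP always holds for $\breve{K}_r$, even though condition~2 may fail. For the base case $r=1$, $\breve{K}_1$ is a positive torus knot $(P_1,q_1)$ with $P_1>1$, and Etnyre--Honda's classification of torus knots \cite{[EH2]} supplies Legendrian simplicity, a single-peaked mountain range with peak at $r=0$ and $\overline{tb}=P_1q_1-P_1-q_1$, and the identity $w=\overline{tb}$ for positive torus knots; it then remains only to note that on $\partial N(K_0)$ the $\mathcal{C}$ and $\mathcal{C}'$ framings coincide, so that $p_1=P_1$ and $A_1-B_1=p_1q_1-p_1-q_1=-\chi(\breve{K}_1)$.

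For the inductive step, assume the hypothesis for $\breve{K}_i$ and let $\breve{K}_{i+1}$ be a cabling with $q_{i+1}/p_{i+1}\notin(-1/B_i,0)$. First I would record the change-of-framing dictionary recalled earlier in the paper (following \cite{[EH1]}): on $\partial N(\breve{K}_i)$ the $\mathcal{C}'$ and $\mathcal{C}$ framings are related by $P_{i+1}=p_{i+1}+q_{i+1}A_i$, so that the defining constraint on $\breve{K}_{i+1}$ is exactly the inequality $P_{i+1}/q_{i+1}>A_i-B_i=w(\breve{K}_i)$. This simultaneously establishes the asserted $\mathcal{C}$-framing description of the family and, with $q_{i+1}>1$ and the inductive inequalities $A_i>B_i>0$, gives $A_{i+1}>B_{i+1}>0$. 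The recursions $A_{i+1}=q_{i+1}(q_{i+1}A_i+p_{i+1})=q_{i+1}P_{i+1}$ and $B_{i+1}=q_{i+1}B_i+p_{i+1}$ fall out of \eqref{ArBr}, whence $A_{i+1}-B_{i+1}=q_{i+1}(A_i-B_i)+(q_{i+1}-1)P_{i+1}$; comparing this with the standard Euler-characteristic recursion for cables, $-\chi(J_{(P,q)})=q(-\chi(J))+(q-1)P$ for cabling fractions above the contact width of $J$, and using $-\chi(\breve{K}_i)=A_i-B_i$, yields $A_{i+1}-B_{i+1}=-\chi(\breve{K}_{i+1})$. This portion is bookkeeping.

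The substantive step is to show that a cabling $J_{(P,q)}$ of a companion $J$ which is Legendrian simple with $\overline{tb}(J)=w(J)$ and single-peaked mountain range is again Legendrian simple, with a single peak at $r=0$, $\overline{tb}=-\chi$, and $w(J_{(P,q)})=\overline{tb}(J_{(P,q)})$, \emph{whenever} the cabling fraction exceeds $w(J)$. Here I would follow Etnyre--Honda's convex-surface analysis of cables: a Legendrian representative of $J_{(P,q)}$ may be isotoped onto a convex torus parallel to $\partial N(J)$, and since $P/q$ is larger than every dividing slope realizable on such a torus, after edge-rounding the companion solid torus can be brought to a standard neighborhood of a Legendrian $J$; hence each Legendrian $J_{(P,q)}$ arises from a Legendrian $J$ by a sequence of stabilizations prescribed entirely by $(P,q)$, so that Legendrian simplicity, the single peak, and its location $r=0$ all descend from $J$, while a Thurston--Bennequin count (together with a pigeonhole argument on the least-twisting such torus) forces $\overline{tb}(J_{(P,q)})=-\chi$ and hence $w=\overline{tb}$. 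The hard part, I expect, is precisely that the companions $\breve{K}_i$ need not satisfy the \emph{full} UTP --- condition~2 already fails for positive torus knots --- so that Etnyre--Honda's cabling theorems, and hence Theorem~\ref{UTP theorem}, cannot simply be quoted or iterated; what must be pushed through is that any solid torus representative of $\breve{K}_i$ which fails to thicken has dividing slope at most $w(\breve{K}_i)$, so it is invisible to cablings at fractions exceeding $w(\breve{K}_i)$, and the classification of the Legendrian $\breve{K}_{i+1}$'s together with the computation of $\overline{tb}(\breve{K}_{i+1})$ then proceed as if the UTP held. Feeding this back into the induction --- using the $A,B$ recursions to confirm $w=\overline{tb}=A_{i+1}-B_{i+1}$ at level $i+1$, and checking that companion- and cable-stabilizations interleave to leave a unique peak at $r=0$ --- closes the argument.
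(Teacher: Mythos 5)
Your skeleton (induction on $r$, base case from Etnyre--Honda's classification of positive torus knots, the framing bookkeeping identifying $q_{i+1}/p_{i+1}\notin(-1/B_i,0)$ with $P_{i+1}/q_{i+1}>w(\breve{K}_i)$, and then running the Etnyre--Honda cabling argument ``as if the UTP held'') matches the paper's strategy in outline, and you correctly locate the difficulty in the failure of condition~2 of the UTP for the companions. But at exactly that point there is a genuine gap: the fact you say ``must be pushed through'' --- that any non-thickenable solid torus representing $\breve{K}_i$ has dividing slope at most $w(\breve{K}_i)$ --- is, as stated, essentially vacuous (every solid torus satisfies $1/\mathrm{slope}(\Gamma_{\partial N})\le w$ by the definition of the width as a supremum), and it is far too weak to drive the two steps that actually constitute the proof of Theorem \ref{main theorem}: uniqueness of the Legendrian class at $\overline{tb}$ and destabilization below $\overline{tb}$. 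What the paper actually proves and uses (Lemmas \ref{basecasethickening} and \ref{nonthickening inductive step}) is a much finer structure result: every solid torus representing $\breve{K}_r$ thickens to one with intersection boundary slope $-(k+1)/(A_rk+B_r)$ in $\mathcal{C}'$, and a non-thickenable torus must have one of these slopes with at least $2n_r^k$ dividing curves; by inequality \ref{simple 1} all such slopes lie in $[-1/B_r,-1/A_r)$. This is what lets one (i) compute $w(\breve{K}_r)=\overline{tb}(\breve{K}_r)$ (Lemma \ref{widthKr}), which you carry as an inductive hypothesis but never re-establish at level $i+1$ --- it does not follow from the $A,B$ recursions; (ii) show any maximal-$tb$ representative is a ruling on a torus of slope exactly $-1/B_{r-1}$, reducing uniqueness to the inductive hypothesis via Proposition 4.3 of \cite{[H]}; and (iii) in the destabilization step, thicken $N(L_r)$ to slope $-1/B_r$ when $B_r<m\le A_r/q_r$ (using inequality \ref{simple 2}), or reach a torus of slope $-1/A_{r-1}$ when $m>A_r/q_r$. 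None of this is available from your ``invisibility'' heuristic.

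Relatedly, your substantive step asserts that ``after edge-rounding the companion solid torus can be brought to a standard neighborhood of a Legendrian $J$''; that is precisely the statement of UTP condition~2, which fails here, and the paper deliberately does not do this --- it instead works with the possibly non-thickenable tori directly, whose slopes and dividing-curve counts are controlled by the inductive Lemma \ref{nonthickening inductive step} (itself a delicate convex-annulus, Imbalance Principle, and edge-rounding argument, including the claim that the intermediate $N_{r-1}$ fails to thicken). Your bookkeeping for $\overline{tb}=A_r-B_r=-\chi(\breve{K}_r)$ via a cable Euler-characteristic recursion is a reasonable alternative to the paper's direct computation in Lemma \ref{tblemma}, but the classification itself cannot be completed without supplying the Section~4 thickening analysis or an equivalent substitute.
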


The Legendrian classification of the $\breve{K}_r$ generalizes that of positive torus knots, as their Legendrian mountain ranges are vertical translates of those for positive torus knots.  A result of Etnyre and Honda is that the $(2,3)$ torus knot fails the UTP; hence many of the $\breve{K}_r$ are iterated cablings that begin with knots failing the UTP.



We then determine more cablings of these $\breve{K}_r$ that are also Legendrian simple, and futhermore satisfy the UTP:

\begin{theorem}
\label{main theorem 3}
Let $K_{r+1}$ be a $(p_{r+1},q_{r+1})$ cabling of $\breve{K}_r$, where $q_{r+1}/p_{r+1} \in (-1/A_r,0)$, as measured in the $\mathcal{C'}$ framing.  Then $K_{r+1}$ is Legendrian simple, $\overline{tb}=A_{r+1}$, and the Legendrian mountain range can be determined based on the Legendrian classification of $\breve{K}_r$.  Moreover, $K_{r+1}$ satisfies the UTP.
\end{theorem}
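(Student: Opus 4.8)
The plan is to pass to the $\mathcal{C}$ framing, recognise $K_{r+1}$ as a cabling of $\breve K_r$ of negative cabling fraction, and then carry out the convex-surface-theory arguments of Etnyre and Honda, the one genuinely new issue being that these must be made to work even though $\breve K_r$ need not itself satisfy the UTP.

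First I would translate the hypothesis. Unwinding the definition of $A_{r+1}$ gives $A_{r+1}=q_{r+1}\bigl(p_{r+1}+q_{r+1}A_r\bigr)$, and the change of coordinates between $\mathcal{C}'$ and $\mathcal{C}$ reviewed in the body (following \cite{[EH1]}) converts $p_{r+1}$ into the $\mathcal{C}$-meridian $P_{r+1}=p_{r+1}+q_{r+1}A_r$; hence $A_{r+1}=P_{r+1}q_{r+1}$, and one checks that $q_{r+1}/p_{r+1}\in(-1/A_r,0)$ is equivalent to the cabling fraction $P_{r+1}/q_{r+1}$ being negative. So $K_{r+1}$ is a negative-fraction cabling of $\breve K_r$, and the number $A_{r+1}=P_{r+1}q_{r+1}$ is exactly the cabling-annulus framing, which is the value of $\overline{tb}$ one expects for such a cable.

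The heart of the proof is the UTP, which will at the same time give $\overline{tb}(K_{r+1})=A_{r+1}$. By standard convex surface theory it is enough to show that every solid torus $N$ representing $K_{r+1}$ can be thickened until $K_{r+1}$ appears as a Legendrian ruling curve of slope $q_{r+1}/p_{r+1}$ on a convex torus $T$ that bounds a solid-torus neighborhood of a representative of $\breve K_r$: the neighborhood of such a ruling curve is automatically a standard neighborhood, and because $q_{r+1}/p_{r+1}$ is negative one checks it is the standard neighborhood of a maximal $tb$ Legendrian, whence $\overline{tb}(K_{r+1})=A_{r+1}$ and the UTP follows. To produce $T$ one makes $\partial N$ convex, thickens, and tracks the induced convex tori around $\breve K_r$, applying Honda's classification of tight contact structures on the intermediate $T^2\times I$'s together with the solid-torus analysis underlying Theorem \ref{main theorem}. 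The hypothesis $q_{r+1}/p_{r+1}\in(-1/A_r,0)$ enters decisively at this point: it puts the cabling slope in precisely the range in which, whatever solid torus representing $\breve K_r$ one encounters along the way --- even a non-standard one, or one that cannot itself be thickened to a standard neighborhood of a maximal $tb$ Legendrian $\breve K_r$ --- the slope $q_{r+1}/p_{r+1}$ is still realised by a ruling curve whose neighborhood is maximal. In other words, although $\breve K_r$ may genuinely fail the UTP (the $(2,3)$ torus knot does, and many $\breve K_r$ are built on it), that failure is felt only at cabling slopes of non-negative fraction and so does not obstruct the present cablings; this is exactly why Theorem \ref{UTP theorem} cannot simply be quoted and the thickening must be argued directly. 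I expect this step to be the main obstacle.

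Granting the UTP and $\overline{tb}(K_{r+1})=A_{r+1}$, the Legendrian classification follows by the now-standard reconstruction. Given a Legendrian $L_{r+1}$ one passes to its standard neighborhood, thickens it inside the $T^2\times I$ between $\partial N(L_{r+1})$ and a convex boundary of a neighborhood of $\breve K_r$, and then combines the Legendrian classification of $\breve K_r$ from Theorem \ref{main theorem}, Honda's classification on the complementary $T^2\times I$ and solid-torus pieces, and the bypass--destabilization dichotomy: any non-maximal $L_{r+1}$ destabilizes, and the pair $(r,tb)$ determines $L_{r+1}$. Assembling these as in the cabling theorems of \cite{[EH1]} then exhibits the Legendrian mountain range of $K_{r+1}$ as the prescribed arrangement of shifted copies of the mountain range of $\breve K_r$, with a single maximal peak at $\overline{tb}=A_{r+1}$.
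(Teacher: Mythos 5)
Your overall architecture matches the paper's: view $K_{r+1}$ as a negative-fraction cabling (indeed $P_{r+1}=p_{r+1}+q_{r+1}A_r<0<w(\breve{K}_r)$), run the Etnyre--Honda scheme for cabling fractions below the width, and use the fact (Lemma \ref{nonthickening inductive step}) that solid tori around $\breve{K}_r$ can only fail to thicken at slopes in $[-1/B_r,-1/A_r)$, so the cabling slope $q_{r+1}/p_{r+1}\in(-1/A_r,0)$ is never obstructed even though $\breve{K}_r$ may fail the UTP. But your central step is circular. You reduce everything to thickening an arbitrary $N_{r+1}$ until $K_{r+1}$ sits on a convex torus of slope $q_{r+1}/p_{r+1}$ around $\breve{K}_r$, and then assert that ``because $q_{r+1}/p_{r+1}$ is negative one checks'' this is the standard neighborhood of a \emph{maximal} $tb$ representative. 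Maximality is exactly the claim $\overline{tb}(K_{r+1})=A_{r+1}$, i.e.\ the upper bound $\overline{t}(K_{r+1})\le 0$, equivalently $w(K_{r+1},\mathcal{C}')=0$; it does not follow from negativity of the slope, nor from your thickening statement, since UTP condition (2) by itself does not give $w=\overline{tb}$ (the width is a supremum over \emph{all} solid tori, thickened or not). The paper's Step 1 is devoted precisely to this point: assuming a solid torus for $K_{r+1}$ with positive $\mathcal{C}'$-slope, it forms the thickened torus $R$ from $N_{r+1}$ and an $\infty'$-annulus, uses the Farey tessellation together with the Section 4 results to thicken the ambient solid torus for $\breve{K}_r$ to a standard, universally tight neighborhood of slope $-1/A_r$, and then derives a contradiction from the mixing of sign in the two basic slices into which $R$ factors (as in Claim 4.2 of \cite{[EH1]}). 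Nothing in your sketch supplies this argument, and it --- not the thickening per se --- is the main content of the first half of the proof.

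Two smaller points. For the UTP one must also cut the thickened $\infty'$-slope torus down to two dividing curves; the paper does this using the fact that $q_{r+1}/p_{r+1}\in(-1/A_r,0)$ forces twisting on both sides of the slab bounded by the two slope-$q_{r+1}/p_{r+1}$ tori, a step your sketch omits. And your description of the outcome is off: at $\overline{tb}=A_{r+1}$ the rotation numbers run from $B_{r+1}$ to $-B_{r+1}$ with gaps alternating $2s$ and $2(q_{r+1}-s)$, so the mountain range of $K_{r+1}$ has several peaks at the maximal level, and one must further show (the paper's Step 5) that each valley destabilizes both positively and negatively; it is not a single-peak range like that of $\breve{K}_r$.
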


Note that by Theorem \ref{UTP theorem}, all iterated cablings beginning with these $K_{r+1}$ are Legendrian simple.

Taken together, these two theorems show that all cablings of $\breve{K}_r$ with slopes in the complement of the interval $[-1/B_r,-1/A_r]$ are Legendrian simple.  This is not by accident; it will be shown that the slopes of dividing curves on the boundary of solid tori representing $\breve{K}_r$ that may fail to thicken will be contained within the interval $[-1/B_r,-1/A_r)$.

We will prove these two theorems using the $\mathcal{C}'$ framing, as they are stated.  However, after changing from $\mathcal{C}'$ to $\mathcal{C}$ via a change of coordinates, Theorems \ref{main theorem} and \ref{main theorem 3} will immediately imply the following Corollaries \ref{fractionslargerthanwidth} and \ref{negativecablings}, respectively, both of which are stated in the $\mathcal{C}$ framing:

\begin{corollary}
\label{fractionslargerthanwidth}
Let $K_r = ((P_1,q_1),...,(P_i,q_i),...,(P_r,q_r))$ be an iterated torus knot where $(P_1,q_1)$ is a positive torus knot, and such that $P_{i+1}/q_{i+1} > w(K_i) = \overline{tb}(K_i)$ for $1 \leq i < r$.  Then $K_r$ is Legendrian simple and has a Legendrian mountain range with a single peak at $\overline{tb}(K_r) = -\chi(K_r)$ and $r=0$.
\end{corollary}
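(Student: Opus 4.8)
The plan is to derive Corollary~\ref{fractionslargerthanwidth} from Theorem~\ref{main theorem} by nothing more than a change of coordinates, using the basic fact that $tb$, the rotation number, and $\chi$ are invariants of the (Legendrian) knot type and do not depend on the framing one uses to record slopes of curves on the cabling tori. Consequently the only thing to verify is that the iterated torus knots described in the hypothesis of the Corollary --- those built from a positive $(P_1,q_1)$ torus knot by taking, at each subsequent iteration, a cabling whose $\mathcal{C}$-fraction $P_{i+1}/q_{i+1}$ strictly exceeds $w(K_i)=\overline{tb}(K_i)$ --- are exactly the knot types $\breve K_r$. Granting this, the conclusion of the Corollary is merely the conclusion of Theorem~\ref{main theorem} transcribed, together with the equalities $\overline{tb}(\breve K_r)=A_r-B_r=-\chi(\breve K_r)$ recorded there; in particular the case $r=1$ recovers the Legendrian classification of positive torus knots from \cite{[EH2]}.

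To match the two families I would use the change of coordinates between $\mathcal{C}$ and $\mathcal{C}'$ on $\partial N(\breve K_i)$ recalled in the body of this note (following \cite{[EH1]}). Since the two framings share the meridian they differ by an integral shift of the longitude, and the relevant shift is exactly $A_i$: the $\mathcal{C}'$-longitude $\infty'$ has $\mathcal{C}$-slope $1/A_i$, equivalently any curve carried by $\partial N(\breve K_i)$ has $\mathcal{C}$-fraction equal to its $\mathcal{C}'$-fraction plus $A_i$, so that $P_{i+1}=p_{i+1}+A_iq_{i+1}$. For $i=1$ this is the classical statement that the Heegaard-torus framing of the $(P_1,q_1)$ torus knot differs from its Seifert framing by $P_1q_1=A_1$; the general case follows by the recursion defining $A_i$ and is in any event part of the analysis made in proving Theorem~\ref{main theorem}, where $w(\breve K_i)=\overline{tb}(\breve K_i)=A_i-B_i$ and $A_i>B_i>0$ are established. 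With this in hand the identification is one line: the defining inequality of $\breve K_{i+1}$, namely $q_{i+1}/p_{i+1}\notin(-1/B_i,0)$, is after taking reciprocals the statement $p_{i+1}/q_{i+1}\notin(-\infty,-B_i)$; since a genuine cabling has $q_{i+1}>1$ the boundary value $p_{i+1}/q_{i+1}=-B_i$ cannot be attained (it would force $q_{i+1}\mid p_{i+1}$), so this says $p_{i+1}/q_{i+1}>-B_i$, and adding the framing shift $A_i$ turns it into $P_{i+1}/q_{i+1}>A_i-B_i=\overline{tb}(K_i)=w(K_i)$, which is precisely the Corollary's hypothesis at step $i$. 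For the first iteration, ``$(P_1,q_1)$ a positive torus knot'' with $|P_1|,q_1>1$ coincides with $p_1=P_1>1$ and $q_1>1$, the base condition for $\breve K_1$. Hence the two families are the same and Theorem~\ref{main theorem} applies verbatim.

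The only step requiring genuine care is the bookkeeping behind the change-of-coordinates claim --- confirming that the $\mathcal{C}$-to-$\mathcal{C}'$ shift on $\partial N(\breve K_i)$ is $A_i$ and not some other combination of the cabling data, and that this is consistent with $\overline{tb}(\breve K_i)=w(\breve K_i)=A_i-B_i$ --- but since both of these facts are produced while proving Theorem~\ref{main theorem}, Corollary~\ref{fractionslargerthanwidth} itself needs essentially no new argument beyond rewriting the hypotheses in the $\mathcal{C}$ framing.
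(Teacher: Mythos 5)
Your proposal is correct and matches the paper's own derivation, which obtains Corollary~\ref{fractionslargerthanwidth} from Theorem~\ref{main theorem} precisely by the change of coordinates from $\mathcal{C}'$ to $\mathcal{C}$ (via $P_{i+1}=p_{i+1}+A_iq_{i+1}$ and $w(\breve K_i)=\overline{tb}(\breve K_i)=A_i-B_i$), identifying the knots in the hypothesis with the class $\breve K_r$. You simply spell out the framing-shift bookkeeping that the paper leaves implicit, and your handling of the endpoint $p_{i+1}/q_{i+1}=-B_i$ via coprimality is a correct detail.
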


\begin{corollary}
\label{negativecablings}
Let $K_{r+1}= ((P_1,q_1),...,(P_i,q_i),...,(P_r,q_r), (P_{r+1},q_{r+1}))$ be an iterated torus knot such that $P_{r+1} < 0$, $(P_1,q_1)$ is a positive torus knot and $P_{i+1}/q_{i+1} > w(K_i)$ for $1 \leq i < r$.  Then $K_{r+1}$ is Legendrian simple and satisfies the UTP.
\end{corollary}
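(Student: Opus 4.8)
The plan is to derive Corollary~\ref{negativecablings} directly from Theorem~\ref{main theorem 3}; the only work is to translate its hypotheses, which are phrased in the $\mathcal{C}$ framing, into the $\mathcal{C}'$ framing used in that theorem, so the argument is essentially a change-of-coordinates computation once the dictionary between $\mathcal{C}$ and $\mathcal{C}'$ (reviewed in the body) is available.

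First I would identify the companion $K_r = ((P_1,q_1),\dots,(P_r,q_r))$ with one of the $\breve{K}_r$. By hypothesis $(P_1,q_1)$ is a positive torus knot and $P_{i+1}/q_{i+1} > w(K_i) = \overline{tb}(K_i)$ for $1 \le i < r$, which is precisely the alternative $\mathcal{C}$-framing description of the class $\breve{K}_r$ recalled above. Hence $K_r = \breve{K}_r$; in particular $K_r$ has $\mathcal{C}'$-coordinates $((p_1,q_1),\dots,(p_r,q_r))$ satisfying the defining conditions, and the associated quantities satisfy $A_r > B_r > 0$ (another fact established in the body).

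Next I would carry out the coordinate change at the final iteration. The body shows that the $\mathcal{C}'$-longitude $\lambda_r'$ of $K_r$ equals $\lambda_r + A_r \mu_r$, where $\lambda_r,\mu_r$ are the Seifert longitude and the meridian of $K_r$; this is the geometric meaning of $A_r$, namely that it is the framing induced on $K_r$ by the cabling torus $\partial N(K_{r-1})$, measured against the Seifert framing. Writing the last cabling curve $K_{r+1} \subset \partial N(K_r)$ as $q_{r+1}\lambda_r + P_{r+1}\mu_r$ and re-expressing it as $q_{r+1}\lambda_r' + p_{r+1}\mu_r$ gives $p_{r+1} = P_{r+1} - A_r q_{r+1}$. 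Since $q_{r+1} \ge 2$ and $A_r > 0$, the hypothesis $P_{r+1} < 0$ forces $p_{r+1} < -A_r q_{r+1} < 0$, so the $\mathcal{C}'$-slope $q_{r+1}/p_{r+1}$ lies in $(-1/A_r,0)$; conversely that interval is exactly the set of $\mathcal{C}'$-slopes coming from negative $P_{r+1}$. One also checks $\gcd(p_{r+1},q_{r+1}) = \gcd(P_{r+1},q_{r+1}) = 1$ and $q_{r+1} > 1$, so $K_{r+1}$ is a genuine $(p_{r+1},q_{r+1})$-cabling of $\breve{K}_r$.

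It then only remains to apply Theorem~\ref{main theorem 3}: $K_{r+1}$ is a $(p_{r+1},q_{r+1})$-cabling of $\breve{K}_r$ with $q_{r+1}/p_{r+1} \in (-1/A_r,0)$, hence it is Legendrian simple, has $\overline{tb} = A_{r+1}$, and satisfies the UTP, which is the assertion of the Corollary. The one ingredient that is not purely formal is the identity $\lambda_r' = \lambda_r + A_r\mu_r$ (equivalently, that the cabling-torus framing of $K_r$ is $A_r$); but this is proved in the body, and is indeed the motivation for the definition of $A_r$, so for the present Corollary it is no obstacle, and everything else is routine bookkeeping.
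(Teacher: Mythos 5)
Your proposal is correct and is essentially the paper's own argument: the paper derives Corollary \ref{negativecablings} from Theorem \ref{main theorem 3} precisely by the change of coordinates from $\mathcal{C}'$ to $\mathcal{C}$ (via $P_{r+1}=p_{r+1}+A_rq_{r+1}$, so $P_{r+1}<0$ corresponds exactly to $q_{r+1}/p_{r+1}\in(-1/A_r,0)$), together with the $\mathcal{C}$-framing characterization of $\breve{K}_r$. You have simply written out the bookkeeping that the paper leaves implicit, and it checks out.
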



Also note that the above classes of knots are transversally simple, since Legendrian simplicity implies transversal simplicity (see Theorem 2.10 in \cite{[EH2]}).

Figure \ref{fig:SchematicNewestB2} is a schematic indicating what is known and what is unknown about the uniform thickness and the Legendrian simplicity of iterated torus knots.  What is known is boxed; what is unknown is in bold with question marks.

\begin{figure}[htbp]
	\centering
		\includegraphics[width=.80\textwidth]{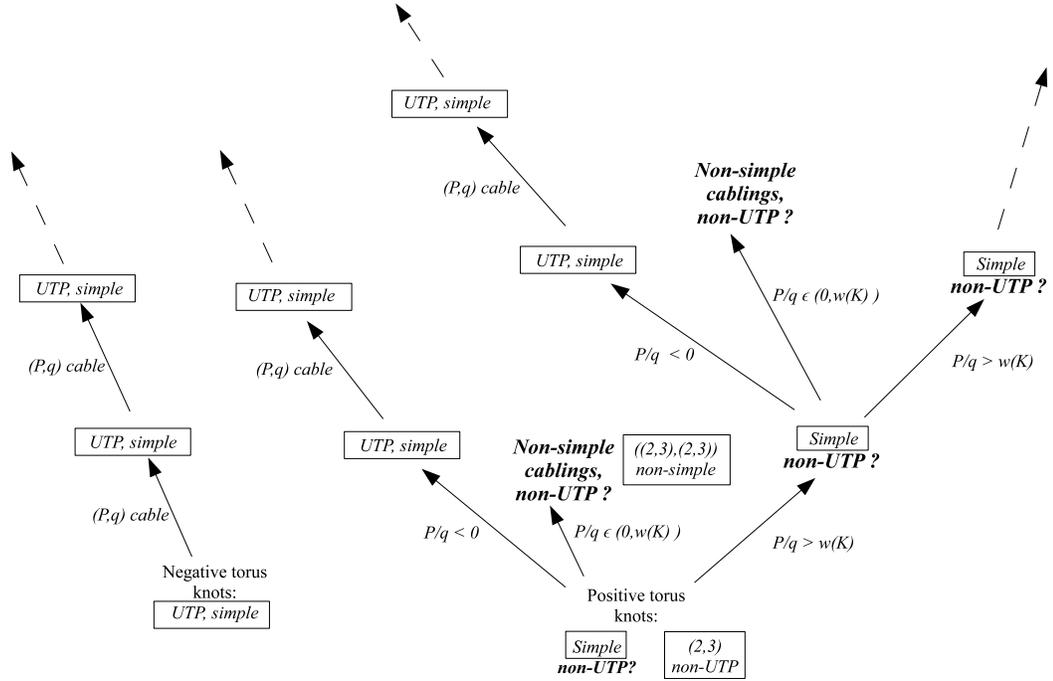}
	\caption{\small{Shown is a schematic that indicates what is known and unknown about uniform thickness and Legendrian simplicity of iterated torus knots.  What is known is boxed; what is unknown is in bold with question marks.  Each arrow represents a single cabling iteration in the standard $\mathcal{C}$ framing.}}
	\label{fig:SchematicNewestB2}
\end{figure}

Combining Theorem \ref{UTP theorem}, Corollaries \ref{fractionslargerthanwidth} and \ref{negativecablings}, and the fact that negative torus knots are simple and satisfy the UTP, yields the following necessary conditions for failure of the UTP for iterated torus knots.

\begin{corollary}
\label{necessarycondforUTP}
Suppose $K_r$ is an iterated torus knot type that fails the UTP. Then either:

\begin{itemize}
	\item[1.] $K_r = \breve{K}_r$.
	\item[2.] $K_r = ((P_1,q_1),..., (P_i,q_i),...,(P_r,q_r))$, where for some $1 \leq i < r$ we have $K_i = \breve{K}_i$ and $P_{i+1}/q_{i+1} \in (0,w(K_i))$.
\end{itemize}

\end{corollary}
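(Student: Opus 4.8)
The plan is to run an induction on the number of iterations $r$, combining the three ingredients named just above --- Theorem \ref{UTP theorem}, Corollary \ref{fractionslargerthanwidth}, and Corollary \ref{negativecablings} --- with the fact that negative torus knots are Legendrian simple and satisfy the UTP \cite{[EH1],[EH2]}. What I would actually prove by induction is the stronger trichotomy that every iterated torus knot type $K_r$ falls into at least one of three classes: (a) it is Legendrian simple and satisfies the UTP; (b) it equals some $\breve K_r$; or (c) it has the form in item 2 of the statement. The corollary is then immediate, since a $K_r$ failing the UTP cannot be in class (a), hence must be in class (b) or class (c), i.e.\ item 1 or item 2 holds.

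For the base case $r=1$, $K_1=(P_1,q_1)$ is a torus knot with $|P_1|,q_1>1$; if $P_1<0$ it is a negative torus knot, hence in class (a), and if $P_1>0$ it is a positive torus knot, which by the $\mathcal C$-framing description of $\breve K_r$ recorded earlier is exactly a $\breve K_1$, hence in class (b). For the inductive step I would write $K_r$ as the $(P_r,q_r)$ cabling of $K_{r-1}$ and apply the trichotomy to $K_{r-1}$. If $K_{r-1}$ is in class (a), Theorem \ref{UTP theorem} says every cabling of it is Legendrian simple and satisfies the UTP, so $K_r$ is in class (a). If $K_{r-1}$ is in class (c), the same witnessing index $i<r-1<r$ shows $K_r$ is in class (c). The remaining case is $K_{r-1}=\breve K_{r-1}$, where $w(K_{r-1})=\overline{tb}(K_{r-1})=A_{r-1}-B_{r-1}$ is a positive integer (Theorem \ref{main theorem}, Corollary \ref{fractionslargerthanwidth}, and $A_{r-1}>B_{r-1}>0$), and since $P_r,q_r$ are coprime with $q_r>1$ the fraction $P_r/q_r$ is neither $0$ nor an integer, so exactly one of $P_r/q_r>w(K_{r-1})$, $P_r/q_r\in(0,w(K_{r-1}))$, $P_r/q_r<0$ holds. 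In the first, the $\mathcal C$-framing description of $\breve K_r$ forces $K_r=\breve K_r$, so $K_r$ is in class (b); in the second, $K_r$ is in class (c) with witnessing index $i=r-1$; in the third, $K_r$ is a negative cabling of $\breve K_{r-1}$, so Corollary \ref{negativecablings}, applied with its ``$r$'' taken to be our $r-1$ (so that its ``$K_{r+1}$'' is our $K_r$), gives that $K_r$ is Legendrian simple and satisfies the UTP, putting it in class (a). This exhausts the cases and closes the induction.

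The only point I expect to require genuine care is the three-way split in the last case: one must check that the ranges for $P_r/q_r$ are truly exhaustive --- which comes down to the elementary facts that $P_r\neq 0$ and that $P_r/q_r$ is non-integral while $w(\breve K_{r-1})$ is a positive integer --- and then cite the appropriate earlier result for each piece, being careful about the index shift when invoking Corollary \ref{negativecablings} and about the fact that this corollary, Corollary \ref{fractionslargerthanwidth}, and the $\breve K$-description are all stated in the $\mathcal C$ framing, which is the framing of the statement being proved. Beyond that, the argument is just bookkeeping carried through the induction, with no new geometric input needed.
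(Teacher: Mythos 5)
Your proposal is correct and follows essentially the same route as the paper, which states this corollary as an immediate consequence of combining Theorem \ref{UTP theorem}, Corollaries \ref{fractionslargerthanwidth} and \ref{negativecablings}, and the Legendrian simplicity and UTP of negative torus knots; your explicit induction, with the trichotomy at each step and the observation that $P_r/q_r$ is nonzero and non-integral while $w(\breve K_{r-1})=A_{r-1}-B_{r-1}$ is a positive integer, is just the careful bookkeeping the paper leaves implicit.
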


Finally, again combining Theorem \ref{UTP theorem}, Corollaries \ref{fractionslargerthanwidth} and \ref{negativecablings}, and the fact that negative torus knots are simple and satisfy the UTP, we obtain the following necessary conditions for Legendrian non-simplicity of iterated torus knots:

\begin{corollary}
\label{necessarycondfornonsimple}
Suppose $K_r$ is an iterated torus knot type that is Legendrian non-simple.  Then $K_r = ((P_1,q_1),..., (P_i,q_i),...,(P_r,q_r))$, where for some $1 \leq i < r$ we have $K_i = \breve{K}_i$ and $P_{i+1}/q_{i+1} \in (0,w(K_i))$.
\end{corollary}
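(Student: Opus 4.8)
The plan is to prove the contrapositive: assuming $K_r$ does not have the stated form, I would trace its cabling history and assemble, from the results already in hand, a proof that $K_r$ is Legendrian simple. The argument runs parallel to (and slightly strengthens) the one behind Corollary \ref{necessarycondforUTP}, the extra input being that $\breve{K}_r$, while it may fail the UTP, is nonetheless Legendrian simple by Corollary \ref{fractionslargerthanwidth}.

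Write $K_r = ((P_1,q_1),\dots,(P_r,q_r))$ in the $\mathcal{C}$ framing. First I would dispose of the case where $(P_1,q_1)$ is a negative torus knot, where Corollary \ref{Cablings of negative torus knots} immediately gives that $K_r$ is Legendrian simple. So suppose $(P_1,q_1)$ is positive, and compare at each iteration $1 \le i < r$ the cabling fraction $P_{i+1}/q_{i+1}$ with the contact width $w(K_i)$. If $P_{i+1}/q_{i+1} > w(K_i)$ for every such $i$, then by the $\mathcal{C}$-framing characterization of $\breve{K}_r$ we have $K_r = \breve{K}_r$, and Corollary \ref{fractionslargerthanwidth} shows $K_r$ is Legendrian simple. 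Otherwise let $i$ be the least index in $\{1,\dots,r-1\}$ with $P_{i+1}/q_{i+1} \le w(K_i)$. Working upward from $j = 1$, minimality of $i$ together with the positivity of $(P_1,q_1)$ forces $K_j = \breve{K}_j$ and $w(K_j) = \overline{tb}(K_j) \in \mathbb{Z}$ for every $j \le i$ (Corollary \ref{fractionslargerthanwidth}, Theorem \ref{main theorem}); in particular $w(K_i)$ is an integer while $P_{i+1}/q_{i+1}$ is not, since $\gcd(P_{i+1},q_{i+1}) = 1$ and $q_{i+1} > 1$, so $P_{i+1}/q_{i+1} < w(K_i)$ strictly.

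Two sub-cases remain. If $P_{i+1}/q_{i+1} \in (0,w(K_i))$ then $K_r$ has exactly the excluded form, contrary to assumption, so this cannot happen. Hence $P_{i+1}/q_{i+1} \le 0$, and since $P_{i+1} \ne 0$ this gives $P_{i+1} < 0$, so $K_{i+1}$ is a negative cabling of $\breve{K}_i$. By Corollary \ref{negativecablings}, $K_{i+1}$ is Legendrian simple and satisfies the UTP, and then Theorem \ref{UTP theorem}, applied inductively over the remaining iterations $i+2,\dots,r$, shows $K_r$ is Legendrian simple. In every case $K_r$ is Legendrian simple, which is what we wanted.

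I expect the only delicate point to be the handling of the least ``bad'' index $i$: one must check that minimality genuinely propagates the identifications $K_j = \breve{K}_j$ and $w(K_j) = \overline{tb}(K_j)$ all the way up to $j = i$, so that Corollary \ref{negativecablings} applies to $K_{i+1}$ and the relevant width is an integer, and that the single sub-case not already covered by Corollary \ref{fractionslargerthanwidth} (all fractions large) or by Corollary \ref{negativecablings} together with Theorem \ref{UTP theorem} (first small fraction negative) is precisely $P_{i+1}/q_{i+1} \in (0,w(K_i))$ — which is the form the corollary asserts. The framing bookkeeping and the non-integrality of cabling fractions are routine.
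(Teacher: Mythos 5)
Your proposal is correct and follows essentially the same route as the paper, which derives this corollary by exactly the combination you assemble: Theorem \ref{UTP theorem}, Corollaries \ref{fractionslargerthanwidth} and \ref{negativecablings}, and the Legendrian simplicity plus UTP of negative torus knots. Your contrapositive with a first ``bad'' index $i$, together with the observation that $w(K_i)=\overline{tb}(K_i)\in\mathbb{Z}$ rules out equality of the cabling fraction with the width, is just a careful spelling-out of the case analysis the paper leaves implicit.
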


We will be using tools developed by Giroux, Kanda, and Honda, and used by Etnyre and Honda in their work, namely convex tori and annuli, the classification of tight contact structures on solid tori and thickened tori, and the Legendrian classificaton of torus knots.  Most of the results we use can be found in \cite{[H]}, \cite{[EH1]}, or \cite{[EH2]}, and if we use a lemma, proposition, or theorem from one of these works, it will be specifically referenced.  We will also briefly make use of facts involving the classical invariant for transversal isotopy classes, namely the self-linking number, $sl$.

With this in mind, this note will proceed as follows.  In \S 2 we prove Theorem \ref{UTP theorem}.  In \S 3 we perform preliminary calculations that allow us to outline a strategy for proving Theorem \ref{main theorem}.  This leads us to \S 4, where we examine solid tori representing $\breve{K}_r$, obtaining necessary conditions for those that fail to thicken, as well as calculating $w(\breve{K}_r)$.  In \S 5 we prove Theorem \ref{main theorem}, and in \S 6 we prove Theorem \ref{main theorem 3}.

\subsection*{Acknowledgements}

This work composes part of my PhD thesis at the University at Buffalo under the advisement of William Menasco, whom I wish to thank for many helpful discussions and suggestions.

\section{Cabling preserves simplicity and the UTP}
We first review some facts about Legendrian knots on convex tori.  Recall that the characteristic foliation induced by the contact structure on a convex torus can be assumed to have a standard form, where there are $2n$ parallel {\em Legendrian divides} and a one-parameter family of {\em Legendrian rulings}.  Parallel push-offs of the Legendrian divides gives a family of $2n$ {\em dividing curves}, referred to as $\Gamma$.  For a particular convex torus, the slope of components of $\Gamma$ is fixed and is called the {\em boundary slope} of any solid torus which it bounds; however, the Legendrian rulings can take on any slope other than that of the dividing curves by Giroux's Flexibility Theorem \cite{[G]}.  A {\em standard neighborhood} of a Legendrian knot $L$ will have two dividing curves and a boundary slope of $1/tb(L)$.

For a topological knot type $K$, if $N$ is a solid torus having a representative of $K$ as its core and convex boundary, then $N$ {\em fails to thicken} if for all $N' \supset N$, we have $\textrm{slope}(\Gamma_{\partial N'}) = \textrm{slope}(\Gamma_{\partial N})$.

Given a ruling curve $L = (P,q)$ on a convex torus $\partial N(K)$, then recall that section 2.1 in \cite{[EH1]} provides a relationship between the framings $\mathcal{C}'$ and $\mathcal{C}$ on $\partial N(L)$.  In terms of a change of basis, we can represent slopes $\lambda/\mu$ as column vectors and then get from a slope $\lambda/\mu'$, measured in $\mathcal{C}'$ on $\partial N(L)$, to a slope $\lambda/\mu$, measured in $\mathcal{C}$, by:

\begin{equation}
\displaystyle \left(\begin{array}{cc}
														1 & Pq\\
														0 & 1\end{array}\right)\left(\begin{array}{c}
														\mu '\\
														\lambda\end{array}\right) = \left(\begin{array}{c}
														\mu\\
														\lambda\end{array}\right)\nonumber
										\end{equation}

In other words, $\mu = \mu ' + Pq\lambda$.  If we then define $t$ to be the twisting of the contact planes along $L$ with respect to the $\mathcal{C}'$ framing on $\partial N(L)$, equation 2.1 in \cite{[EH1]} gives us:  
														
\begin{equation}
\label{5}
tb(L) = Pq + t(L)
\end{equation}

Observe that $t(L)$ is also the twisting of the contact planes with respect to the framing given by $\partial N$, and so is equal to $-1/2$ times the geometric intersection number of $L$ with $\Gamma_{\partial N}$.  The maximal twisting number with respect to this framing will be denoted by $\overline{t}$.

Finally, recall that if $\mathcal{A}$ is a convex annulus with Legendrian boundary components, then dividing curves are arcs with endpoints on either one or both of the boundary components; an annulus with no boundary-parallel dividing curves is said to be {\em standard convex}.

We can now prove Theorem \ref{UTP theorem}:

\begin{proof}
Recall that we have a knot $K$ that is Legendrian simple and satisfies the UTP.  By Theorem 1.3 in \cite{[EH1]}, we know that $(P,q)$ cables are simple and satisfy the UTP, provided $P/q < w(K)$.  Thus we only need to look at the case where $P/q > w(K)$.  We will refer to the $(P,q)$ cable as $K_{(P,q)}$.  From Theorem 3.2 in \cite{[EH1]}, we know that $K_{(P,q)}$ is Legendrian simple and that $\overline{t}(K_{(P,q)}) < 0$.  Moreover, we know from the same theorem that $K_{(P,q)}$ achieves $\overline{tb}(K_{(P,q)})$ as a Legendrian ruling curve on a convex torus with boundary slope $1/w(K)$ and two dividing curves.

To prove that $K_{(P,q)}$ satisfies the UTP, it suffices to show that any solid torus $N_{(P,q)}$ representing $K_{(P,q)}$ thickens to a standard neighborhood of a Legendrian knot at $\overline{tb}(K_{(P,q)})$.  So given a solid torus $N_{(P,q)}$, let $\mathcal{A}$ be a convex annulus connecting $\partial N_{(P,q)}$ to itself, with $\partial \mathcal{A}$ being two $\infty^\prime$ rulings so that $\partial N_{(P,q)} \backslash \partial \mathcal{A}$ consists of two annuli, one of which, along with $\mathcal{A}$, bounds a solid torus $\widehat{N}$ representing $K$ with $\widehat{N} \supset N_{(P,q)}$.  Now since $K$ satisfies the UTP, $\widehat{N}$ can be thickened to a standard neighborhood of a Legendrian knot at $\overline{tb}(K)$, which we call $N$.  See part (a) in Figure \ref{fig:UTP1BB}.

\begin{figure}[htbp]
	\centering
		\includegraphics[width=0.80\textwidth]{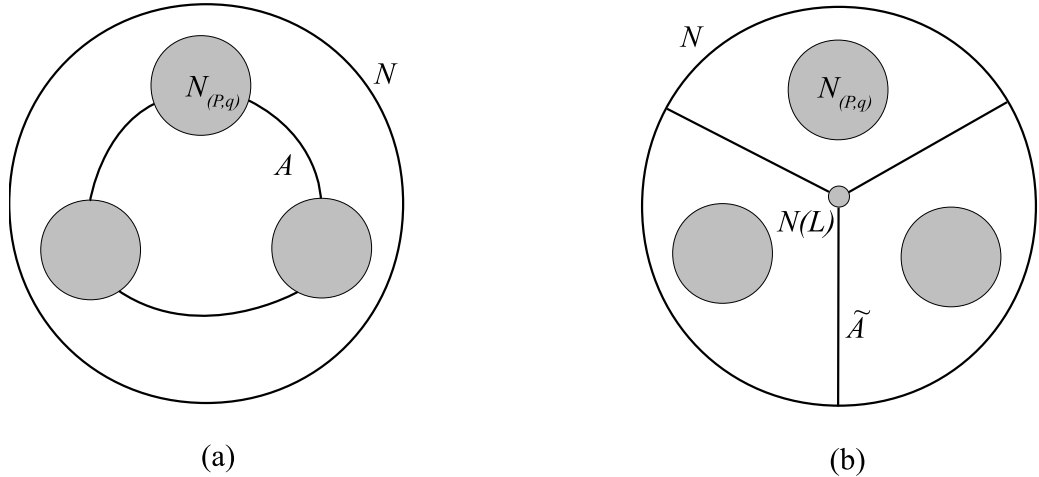}
	\caption{\small{Shown is a meridional cross-section of $N$.  The larger torus in gray is $N_{(P,q)}$; the smaller torus in gray is $N(L)$.}}
	\label{fig:UTP1BB}
\end{figure}

We now let $L$ be a Legendrian core curve representing $K$ in $\widehat{N} \backslash N_{(P,q)}$, and let $\widetilde{\mathcal{A}}$ be a convex annulus joining $\partial N$ to $\partial N(L)$ inside $N \backslash N_{(P,q)}$, with boundary components $(P,q)$ Legendrian rulings.  See part (b) in Figure \ref{fig:UTP1BB}.  We may assume that we have topologically isotoped $L$ so that the Thurston-Bennequin number is maximized over all such topological isotopies for the space $N \backslash N_{(P,q)}$.  $N(L)$ will have dividing curves of slope $1/m$ in $\mathcal{C}$, where $m\in \mathbb{Z}$.  We claim that in fact $m=\overline{tb}(K)$.  For if $m < \overline{tb}(K)$, then by the Imbalance Principle, there must exist bypasses on the $\partial N(L)$-edge of $\widetilde{\mathcal{A}}$, since the $\partial N$-edge of $\widetilde{\mathcal{A}}$ is at maximal twisting (see Prop 3.17 in \cite{[H]}).  But such a bypass would induce a destabilization of $L$, thus increasing its $tb$ by one -- see Lemma 4.4 in \cite{[H]}.  To satisfy the conditions of this lemma, we are using the fact that $P/q > w(K)$.  Thus $m=\overline{tb}(K)$ and $\widetilde{\mathcal{A}}$ is standard convex.

Finally, note that now $N_{(P,q)}$ thickens to $\widetilde{N}_{(P,q)}=N \backslash (N(\widetilde{\mathcal{A}}) \cup N(L))$.  We can calculate the boundary slope of $\widetilde{N}_{(P,q)}$.  We choose $(P',q')$ to be a curve on $N$ and $N(L)$ such that $Pq'-P'q=1$, and we change coordinates to a basis $\mathcal{C}''$ via the map $((P,q),(P',q')) \mapsto ((0,1),(-1,0))$.  Under this map we obtain 
\begin{equation}
\textrm{slope}(\Gamma_{\partial N}) = \textrm{slope}(\Gamma_{\partial N(L)})= \frac{q'w(K)-P'}{qw(K)-P}
\end{equation}

We then obtain in the $\mathcal{C}'$ framing, after edge-rounding, that

\begin{eqnarray}
\textrm{slope}(\Gamma_{\partial \widetilde{N}_{(P,q)}})&=&\frac{q'w(K)-P'}{qw(K)-P} -\frac{q'w(K)-P'}{qw(K)-P}+\frac{1}{qw(K)-P}\nonumber\\
																			&=&\frac{1}{qw(K)-P} = \frac{1}{\overline{t}(K_{(P,q)})} 
\end{eqnarray}

Hence the boundary slope of $\widetilde{N}_{(P,q)}$ must be $1/\overline{tb}(K_{(P,q)})$ with two dividing curves in the standard $\mathcal{C}$ framing.  Thus $K_{(P,q)}$ satisfies the UTP.
\end{proof}

\section{Preliminary calculations}

In this section we collect some identities and lemmas that will be useful in our analysis of iterated cablings that begin with positive torus knots.

First suppose $K_r = ((p_1,q_1),...,(p_r,q_r))$ is a general $r$-iterated torus knot type, with $p_i$'s measured in the $\mathcal{C}'$ framing.  We first obtain a formula for the $P_i$'s as measured in the standard $\mathcal{C}$ framing.  To this end, from equation \ref{ArBr} we obtain two useful identities:

\begin{equation}
\label{ArBrrecursive}
A_r = q_r^2 A_{r-1} + p_rq_r \qquad \qquad B_r = q_r B_{r-1} + p_r
\end{equation}

Now suppose we have a $((p_1,q_1),...,(p_r,q_r))$ iterated torus knot as described above, and let $P_i$ be the meridians for the $i$-th iteration, but as measured in the standard $\mathcal{C}$ framing.  To determine $P_{i+1}$, the algebraic intersection with the preferred longitude, we use the change of basis mentioned in \S 2 to obtain $P_{i+1} = q_{i+1}P_iq_i + p_{i+1}$.  We then can prove the following lemma:

\begin{lemma}
\label{Pr}
$P_r=q_rA_{r-1} + p_r$ for $r \geq 2$ and $A_r = P_r q_r$ for $r \geq 1$.
\end{lemma}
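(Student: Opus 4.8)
The plan is to prove both statements by induction on $r$, using the two recursive relations available to us: the recursion $P_{i+1} = q_{i+1}P_iq_i + p_{i+1}$ derived just above from the change of basis, and the recursion $A_r = q_r^2 A_{r-1} + p_r q_r$ from \eqref{ArBrrecursive}. The second claim, $A_r = P_r q_r$ for $r \geq 1$, is the cleaner one to anchor the induction, so I would establish it first and then read off the first claim as a consequence.

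First I would check the base case $r = 1$. By definition $A_1 = \sum_{\alpha=1}^{1} p_\alpha \prod_{\beta=2}^{1} q_\beta \prod_{\beta=1}^{1} q_\beta = p_1 \cdot 1 \cdot q_1 = p_1 q_1$, using the empty-product convention. On the other hand, for the first iteration the $\mathcal{C}'$ and $\mathcal{C}$ framings agree in the relevant sense, so $P_1 = p_1$, giving $P_1 q_1 = p_1 q_1 = A_1$. (If one prefers, this is just the $i=0$ instance of the change-of-basis recursion with the convention that the ambient framing contributes nothing at the first stage.) This settles $A_r = P_r q_r$ for $r = 1$.

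Next, the inductive step for $A_r = P_r q_r$. Assume $A_{r-1} = P_{r-1} q_{r-1}$. Then
\begin{equation}
P_r q_r = (q_r P_{r-1} q_{r-1} + p_r) q_r = q_r^2 (P_{r-1}q_{r-1}) + p_r q_r = q_r^2 A_{r-1} + p_r q_r = A_r,
\end{equation}
where the first equality is the change-of-basis recursion $P_r = q_r P_{r-1} q_{r-1} + p_r$, the third uses the inductive hypothesis, and the last is \eqref{ArBrrecursive}. This closes the induction.

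Finally, the formula $P_r = q_r A_{r-1} + p_r$ for $r \geq 2$ follows immediately: by the change-of-basis recursion $P_r = q_r P_{r-1} q_{r-1} + p_r$, and by the already-established identity $A_{r-1} = P_{r-1} q_{r-1}$ (valid since $r - 1 \geq 1$), we may substitute to get $P_r = q_r A_{r-1} + p_r$. I do not expect any genuine obstacle here; the only point requiring a little care is the base case, namely correctly interpreting what $P_1$ means relative to $p_1$ and checking that the empty-product conventions make $A_1 = p_1 q_1$ come out right. Once the $r=1$ case of $A_r = P_r q_r$ is pinned down, both recursions do all the work and the two statements drop out together.
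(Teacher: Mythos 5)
Your proof is correct and follows essentially the same route as the paper: induction on $r$ anchored by $P_1 = p_1$ (hence $A_1 = P_1 q_1$), combined with the change-of-basis recursion $P_r = q_r P_{r-1} q_{r-1} + p_r$ and the recursion $A_r = q_r^2 A_{r-1} + p_r q_r$ from equation \eqref{ArBrrecursive}. The only cosmetic difference is that the paper spells out $r=2$ as an explicit base case, whereas you absorb it into the general inductive step; the content is identical.
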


\begin{proof}
First observe that $P_1=p_1$ and so equation \ref{ArBr} immediately gives us $A_1=P_1q_1$.  We then use induction, beginning with a base case of $r=2$.  From the comments above we have $P_2=q_2A_1 + p_2$, and thus $A_2=P_2q_2$.  But then inductively we can assume that $A_{r-1}=P_{r-1}q_{r-1}$, and so again by the above comments $P_r=q_rA_{r-1}+p_r$, and hence $A_r = P_rq_r$.
\end{proof}

Note that as a consequence of this lemma, the change of coordinates from the $\mathcal{C}'$ framing to the $\mathcal{C}$ framing on $\partial N(K_r)$ becomes left multiplication by $\displaystyle \left(\begin{array}{cc}
														1 & A_r\\
														0 & 1\end{array}\right)$.

We now focus in on those particular iterated torus knot types $\breve{K}_r$ with $r \geq 1$, $q_i > 1$ for all $i$, $p_1 > 1$, and where for $i \geq 1$ we have $q_{i+1}/p_{i+1} \notin (-1/B_i,0)$.  We first prove a preliminary lemma concerning $A_r$, $B_r$, and $P_r$.

\begin{lemma}
$A_r > B_r > 0$ and $P_r > 0$ for any iterated torus knot type $\breve{K}_r$.
\end{lemma}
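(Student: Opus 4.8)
The plan is to prove both inequalities simultaneously by induction on $r$, using the recursive formulas $A_r = q_r^2 A_{r-1} + p_r q_r$ and $B_r = q_r B_{r-1} + p_r$ from equation \ref{ArBrrecursive}, together with the hypothesis $q_{i+1}/p_{i+1} \notin (-1/B_i, 0)$. The statement $P_r > 0$ will then follow from Lemma \ref{Pr}, since $A_r = P_r q_r$ with $q_r > 1 > 0$ forces $P_r$ to have the same sign as $A_r$.

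For the base case $r = 1$, we have $A_1 = P_1 q_1 = p_1 q_1$ and $B_1 = p_1 + q_1$ (taking the empty product to be $1$ and the sum to have one term). Since $p_1 > 1$ and $q_1 > 1$, both are positive, and $A_1 - B_1 = p_1 q_1 - p_1 - q_1 = (p_1-1)(q_1-1) - 1 \geq 0$; in fact since $p_1, q_1 \geq 2$ are coprime one of them is at least $3$, so $A_1 > B_1$. This handles the base case.

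For the inductive step, assume $A_{r-1} > B_{r-1} > 0$. The hypothesis $q_r/p_r \notin (-1/B_{r-1}, 0)$ is the key input: I would translate it into a usable inequality on $p_r$ given that $q_r > 1 > 0$. If $p_r > 0$ then certainly $B_r = q_r B_{r-1} + p_r > 0$ and $A_r = q_r^2 A_{r-1} + p_r q_r > 0$. If $p_r < 0$, then $q_r/p_r < 0$, so the condition $q_r/p_r \notin (-1/B_{r-1},0)$ forces $q_r/p_r \leq -1/B_{r-1}$, i.e. $q_r B_{r-1} \geq -p_r$ (recall $p_r < 0$), which gives $B_r = q_r B_{r-1} + p_r \geq 0$; and coprimality rules out equality except in degenerate cases, so $B_r > 0$. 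For the comparison $A_r > B_r$, I would compute $A_r - B_r = q_r^2 A_{r-1} + p_r q_r - q_r B_{r-1} - p_r = q_r^2 A_{r-1} - q_r B_{r-1} + p_r(q_r - 1)$. Using $A_{r-1} > B_{r-1}$ we get $q_r^2 A_{r-1} - q_r B_{r-1} > q_r^2 B_{r-1} - q_r B_{r-1} = q_r(q_r-1)B_{r-1}$, so $A_r - B_r > (q_r - 1)(q_r B_{r-1} + p_r) = (q_r-1) B_r > 0$, where the last step uses $q_r > 1$ and $B_r > 0$ just established. Finally $A_r > B_r > 0$ combined with $A_r = P_r q_r$ and $q_r > 1$ gives $P_r > 0$.

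The main obstacle is the careful sign bookkeeping when $p_r < 0$: one must be sure that the hypothesis $q_{i+1}/p_{i+1} \notin (-1/B_i,0)$ is being used with the correct orientation of the inequality, and that strict positivity of $B_r$ (not merely $B_r \geq 0$) is justified — this is where coprimality of $p_r$ and $q_r$ enters, since $q_r B_{r-1} + p_r = 0$ with $q_r \mid$ nothing forcing it would contradict $\gcd(p_r, q_r) = 1$ unless $B_{r-1} = 1$, which is excluded by the inductive hypothesis $B_{r-1} > 0$ refined to $B_{r-1} > 1$ (worth tracking separately, or one observes $B_{r-1} \geq B_1 \geq p_1 + q_1 \geq 5$). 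Once strict positivity of $B_r$ is in hand, the comparison $A_r > B_r$ is a short algebraic manipulation as sketched above.
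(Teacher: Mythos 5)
Your proof is correct and takes essentially the same route as the paper's: induction on $r$ via the recursions $A_r=q_r^2A_{r-1}+p_rq_r$ and $B_r=q_rB_{r-1}+p_r$, splitting on the sign of $p_r$ and using $q_r/p_r\notin(-1/B_{r-1},0)$, with your unified estimate $A_r-B_r>(q_r-1)B_r$ and the deduction of $P_r>0$ from $A_r=P_rq_r$ being only cosmetic variants of the paper's chain $A_r> q_rA_{r-1}+p_r=P_r>q_rB_{r-1}+p_r=B_r$. One small correction to your final paragraph: the fallback claim $B_{r-1}\ge B_1\ge 5$ is false in general (for instance $((3,2),(-9,2))$ is an admissible $\breve{K}_2$ with $B_2=1<B_1=5$), and no lower bound on $B_{r-1}$ is needed, since $B_r=0$ would force $p_r=-q_rB_{r-1}$ with $B_{r-1}$ a positive integer, hence $q_r\mid p_r$, contradicting $\gcd(p_r,q_r)=1$ and $q_r>1$; this coprimality observation alone handles the boundary case $q_r/p_r=-1/B_{r-1}$ that the paper's phrasing tacitly sets aside.
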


\begin{proof}
Observe that since $A_1 > B_1 > 0$ and $P_1=p_1 > 0$ for positive torus knots, we can assume inductively that $A_{r-1} > B_{r-1} > 0$ and that $P_{r-1} > 0$.  Then if $p_r > 0$, we certainly have $P_r > 0$ by Lemma \ref{Pr}; moreover, $A_r = q_r^2 A_{r-1} + p_rq_r > q_r A_{r-1} + p_r > q_rB_{r-1} + p_r = B_r > 0$.  In the other case, if $q_r/p_r < -1/B_{r-1}$, that means that $q_rB_{r-1}+p_r =B_r > 0 $.  Moreover, $P_r = q_rA_{r-1}+p_r > q_rB_{r-1}+p_r > 0$.  Finally, note that the previous proof that $A_r > B_r$ works for this case too.
\end{proof}

Recall that Lemma 2.2 in \cite{[EH1]} provides us with a way of calculating $r(L_r)$ from $r(\partial D)$ and $r(\partial \Sigma)$, where $D$ is a convex meridional disc for $N_{r-1}$ and $\Sigma$ is a convex Seifert surface for the preferred longitude on $N_{r-1}$.  Specifically, we have the equation:

\begin{equation}
\label{6}
r(L_r) = P_r r(\partial D) + q_r r(\partial \Sigma)
\end{equation}

We now can prove the following lemma:

\begin{lemma}
\label{tblemma}
$\overline{sl}(\breve{K}_r)=\overline{tb}(\breve{K}_r)=A_r - B_r = -\chi(\breve{K}_r)$
\end{lemma}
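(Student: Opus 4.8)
The plan is to show the four quantities are squeezed between one another. One always has $\overline{tb}(\breve{K}_r) \le \overline{sl}(\breve{K}_r) \le -\chi(\breve{K}_r)$: the left inequality comes from passing to a transverse push-off of a $tb$-maximizing Legendrian representative, and the right one is the Bennequin inequality. So it is enough to establish the topological identity $-\chi(\breve{K}_r) = A_r - B_r$ and the lower bound $\overline{tb}(\breve{K}_r) \ge A_r - B_r$; together these pinch $A_r - B_r \le \overline{tb}(\breve{K}_r) \le \overline{sl}(\breve{K}_r) \le -\chi(\breve{K}_r) = A_r - B_r$, forcing all four to agree. Both facts I would prove by induction on $r$; the base case $r = 1$ is the Legendrian classification of the positive torus knot $(p_1,q_1)$ from \cite{[EH2]}, where $\overline{sl} = \overline{tb} = p_1q_1 - p_1 - q_1$, and a direct expansion of equation \ref{ArBr} shows this also equals $-\chi(\breve{K}_1)$ and $A_1 - B_1$.

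For the topological identity, I would first extract from the recursions in equation \ref{ArBrrecursive}, together with Lemma \ref{Pr}, the relation $A_r - B_r = q_r(A_{r-1} - B_{r-1}) + (q_r - 1)P_r$. On the other side, the passage from $\breve{K}_{r-1}$ to $\breve{K}_r$ is, once one moves to the $\mathcal{C}$ framing, a genuine $(P_r,q_r)$-cabling with $P_r > 0$ (by the positivity lemma above) and $q_r > 1$, so the Seifert genus obeys the usual cable formula $g(\breve{K}_r) = q_r\, g(\breve{K}_{r-1}) + \frac{(P_r - 1)(q_r - 1)}{2}$. Rewriting this using $-\chi = 2g - 1$ gives precisely $-\chi(\breve{K}_r) = q_r(-\chi(\breve{K}_{r-1})) + (q_r - 1)P_r$, the same recursion, so the inductive hypothesis $-\chi(\breve{K}_{r-1}) = A_{r-1} - B_{r-1}$ propagates.

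For the lower bound, let $L_{r-1}$ be a Legendrian representative of $\breve{K}_{r-1}$ with $tb(L_{r-1}) = \overline{tb}(\breve{K}_{r-1}) = A_{r-1} - B_{r-1}$ (inductive hypothesis, using that the truncation $\breve{K}_{r-1}$ is again of the required form), and let $N$ be its standard neighborhood, so $\partial N$ is convex with two dividing curves of slope $1/(A_{r-1} - B_{r-1})$ in $\mathcal{C}$. Since $P_r$ and $q_r$ are coprime with $q_r > 1$, the slope $q_r/P_r$ of $\breve{K}_r$ on $\partial N$ cannot be the dividing slope (this would force $q_r \mid P_r$, impossible as $\gcd(P_r,q_r)=1$ and $q_r>1$), so by Giroux flexibility $\breve{K}_r$ is realized as a ruling curve $L_r$ on $\partial N$. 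Its geometric intersection number with $\Gamma_{\partial N}$ is $2\,|P_r - q_r(A_{r-1} - B_{r-1})|$, and using $P_r = q_rA_{r-1} + p_r$ together with the recursion $B_r = q_rB_{r-1} + p_r$ this equals $2B_r$; since $B_r > 0$ we get $t(L_r) = -B_r$, and then equation \ref{5} with $P_rq_r = A_r$ (Lemma \ref{Pr}) yields $tb(L_r) = A_r - B_r$. Hence $\overline{tb}(\breve{K}_r) \ge A_r - B_r$, which closes the squeeze.

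The step I expect to be the main obstacle is this last one — exhibiting the Legendrian representative and pinning down its $tb$: one must be careful that $\breve{K}_r$ really sits as a ruling curve on the boundary of the standard neighborhood of a $tb$-maximizing representative of $\breve{K}_{r-1}$, and that the intersection count with the dividing set comes out to $2B_r$ with the correct sign, so that $t(L_r) = -B_r$ rather than $-|B_r|$; the defining inequality $q_{i+1}/p_{i+1} \notin (-1/B_i,0)$ on $\breve{K}_r$ enters precisely through the positivity $B_r > 0$ guaranteed by the lemma above. The topological half is routine, but it does genuinely need the prior change to the $\mathcal{C}$ framing so that the cabling parameter $P_r$ is positive before the positive cable genus formula is applied.
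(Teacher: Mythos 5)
Your proof is correct, and its lower-bound half is essentially the paper's own argument: you realize $\breve{K}_r$ as a $(P_r,q_r)$ curve on the convex boundary of a standard neighborhood of a $tb$-maximizing representative of $\breve{K}_{r-1}$, count intersections with the dividing set to get twisting $-B_r$, and apply equation \ref{5} with $P_rq_r=A_r$; the paper does exactly this (phrased as "the twisting of the cabling equals $-B_r$"). Where you genuinely diverge is the topological identity $-\chi(\breve{K}_r)=A_r-B_r$: the paper translates the closed-form Euler characteristic formula of \cite{[BW]} into the $p_i,q_i$ variables by a fairly long algebraic manipulation, whereas you run an induction through the cable genus formula $g(\breve{K}_r)=q_r\,g(\breve{K}_{r-1})+\tfrac{(P_r-1)(q_r-1)}{2}$ and match it against the recursion $A_r-B_r=q_r(A_{r-1}-B_{r-1})+(q_r-1)P_r$, which follows from \ref{ArBrrecursive} and Lemma \ref{Pr}; this is shorter and more transparent, at the cost of importing the standard cable genus theorem (and, as you note, needing $P_r>0$ so the pattern is a positive torus knot). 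Two smaller differences deserve attention. First, your squeeze uses $\overline{tb}\le\overline{sl}$, but a positive transverse push-off of a $tb$-maximizing Legendrian only gives $sl=tb-r$, so you need a maximizer with $r\le 0$; this is fine by the standard symmetry of the mountain range under $r\mapsto -r$ (Legendrian mirror), but it should be said. The paper avoids this by tracking rotation numbers: its cabled representative has $r=0$ (via $r(L_r)=P_r\,r(\partial D)+q_r\,r(\partial\Sigma)$ with both terms zero), and the push-off of that representative gives $\overline{sl}$ directly. Second, that $r=0$ peak representative is not just a convenience -- it is exactly what the corollary immediately following the lemma uses to locate the single peak of the mountain range on the $r=0$ axis, so if your version were substituted into the paper you would want to add the rotation number computation for your ruling curve.
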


\begin{proof}
We first show that $\chi(\breve{K}_r) = B_r - A_r$; as a consequence, from the Bennequin inequality we obtain $\overline{sl}(\breve{K}_r) \leq A_r - B_r$ and $\overline{tb}(\breve{K}_r) \leq A_r - B_r$.

To this end, we use a formula for $\chi(K_r)$ given at the end of the proof of Corollary 3 in \cite{[BW]}.  The notation used in that paper is that an iterated torus knot $K_r$ is given by a sequence $(e_1(p_1,q_1),e_2(p_2,q_2),\cdots,e_r(p_r,q_r))$ where $p_i,q_i > 0$, $e_i = \pm 1$ indicates the parity of the cabling (either positive or negative), $e_1(p_1,q_1)$ is a torus knot, and for $i > 1$ the $p_i$ represent (efficient) geometric intersection with a meridian, while the $q_i$ represent (efficient) geometric intersection with a preferred longitude.  Using this notation from \cite{[BW]}, therefore, the formula of interest is:

\begin{equation}
\displaystyle\chi(K_r) = \prod_{i=1}^r p_i - \sum_{i=1}^r e_iq_i(p_i-1)\prod_{j=i+1}^r p_j - \sum_{i=1}^r (1-e_i)q_i(p_i-1)\prod_{j=i+1}^r p_j\nonumber
\end{equation}

We need to translate this formula into our notation.  For our $\breve{K}_r$ we have $e_i = 1$, since we are cabling positively at each iteration; also, our $(P_i,q_i)$ corresponds to $(q_i,p_i)$ in \cite{[BW]} for $i > 1$.  Thus our formula for $\chi(\breve{K}_r)$ is:

\begin{equation}
\displaystyle \chi(\breve{K}_r) = P_1 \prod_{i=2}^r q_i - q_1(P_1 - 1)\prod_{i=2}^r q_i - \sum_{i=2}^r P_i(q_i-1)\prod_{j=i+1}^r q_j \nonumber
\end{equation}

To show that this is equal to $B_r - A_r$, we need to rewrite it in terms of our $p_i$'s.  To do this, we note that $P_1 = p_1$, and from Lemma 3.1 we have for $i \geq 2$ that $\displaystyle P_i = p_i + q_i \sum_{\alpha = 1}^{i-1} p_\alpha \prod_{\beta=\alpha+1}^{i-1}q_\beta\prod_{\beta=\alpha}^{i-1}q_\beta$.  Our equation then becomes:

\begin{eqnarray*}
\displaystyle \chi(\breve{K}_r) &=& p_1 \prod_{i=2}^r q_i - q_1(p_1 - 1)\prod_{i=2}^r q_i\\
									& &-\sum_{i=2}^r \left(p_i + q_i \sum_{\alpha = 1}^{i-1} p_\alpha \prod_{\beta=\alpha+1}^{i-1}q_\beta\prod_{\beta=\alpha}^{i-1}q_\beta\right)(q_i-1)\prod_{j=i+1}^r q_j \nonumber
\end{eqnarray*}

If we distribute a few times and collect terms with plus signs, we obtain

\begin{eqnarray*}
\displaystyle \chi(\breve{K}_r) &=& p_1 \prod_{i=2}^r q_i + \prod_{i=1}^r q_i + \sum_{i=2}^r p_i \prod_{j=i+1}^r q_j -p_1 \prod_{i=1}^r q_i - \sum_{i=2}^r p_i \prod_{j=i}^r q_j \\
																& & -\sum_{i=2}^r \left(q_i \sum_{\alpha = 1}^{i-1} p_\alpha \prod_{\beta=\alpha+1}^{i-1}q_\beta\prod_{\beta=\alpha}^{i-1}q_\beta\right)(q_i-1)\prod_{j=i+1}^r q_j\nonumber
\end{eqnarray*}

The top line of the equation with plus signs is $B_r$; for the terms with minus signs, for each $i$ we can collect terms of like $p_i$ and obtain:

\begin{eqnarray*}
\displaystyle\chi(\breve{K}_r) &=& B_r - \sum_{i=1}^r p_i \left(\prod_{j=i}^r q_j +\sum_{j=i+1}^r q_j \prod_{\beta=i+1}^{j-1}q_\beta \prod_{\beta=i}^{j-1}q_\beta (q_j-1)\prod_{\beta=j+1}^r q_\beta\right)\\
																&=& B_r - \sum_{i=1}^r p_i \left(\prod_{j=i}^r q_j \left[1+\sum_{j=i+1}^r (q_j-1)\prod_{\beta=i+1}^{j-1}q_\beta\right]\right) = B_r - A_r\nonumber
\end{eqnarray*}

where in the last line we have used that $\displaystyle \left[1+\sum_{j=i+1}^r (q_j-1)\prod_{\beta=i+1}^{j-1}q_\beta\right] = \prod_{j=i+1}^r q_j$ (along with a notational convention that $\displaystyle \sum_{r+1}^r = 0$).

Then inductively we can assume $\overline{tb}(\breve{K}_{r-1})=A_{r-1}-B_{r-1}$ and there is a representative at that $tb$ value having $r=0$, since this is true for positive torus knots \cite{[EH2]}.  Then look at the $(p_r,q_r)$ cabling on a standard neighborhood of that representative of $\breve{K}_{r-1}$ at $tb=A_{r-1}-B_{r-1}$ and $r=0$.  Then the longitude and meridian both have $r=0$, and the twisting of the cabling equals $-B_r$.  Thus there is a representative of $\breve{K}_r$ at $tb=A_r-B_r$ and $r=0$, and hence $\overline{tb}(\breve{K}_r)= A_r-B_r$.  Moreover, by taking a positive transverse push-off, this proves $\overline{sl}(\breve{K}_r)= A_r-B_r$.
\end{proof}

Now in the Legendrian mountain range for $\breve{K}_r$, the outer left slope contains all Legendrian isotopy classes whose positive transverse push-offs are at $\overline{sl}$.  By the proof above, this slope must intersect the $r=0$ axis at $\overline{tb}=A_r-B_r$.  Since the mountain range is symmetric about the $r=0$ axis, we thus have the following corollary:

\begin{corollary}
The Legendrian mountain range for $\breve{K}_r$ consists of isotopy classes contained in a single peak centered around the line $r=0$ and with height at $\overline{tb}=A_r-B_r$.
\end{corollary}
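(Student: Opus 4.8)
The plan is to read the shape of the Legendrian mountain range off Lemma~\ref{tblemma} together with the standard dictionary between the classical Legendrian invariants and the self-linking numbers of the transverse push-offs. Recall that for a Legendrian representative $L$ of $\breve K_r$ the positive and negative transverse push-offs have self-linking numbers $tb(L)-r(L)$ and $tb(L)+r(L)$; applying the Bennequin inequality to each and using $-\chi(\breve K_r)=A_r-B_r$ from Lemma~\ref{tblemma} gives
\[
tb(L)-r(L)\le A_r-B_r, \qquad tb(L)+r(L)\le A_r-B_r
\]
for every $L$. In the $(r,tb)$-grid these two inequalities cut out the region below the two lines of slope $\pm1$ through $(0,A_r-B_r)$, i.e.\ the ``peak'' $\{\,|r|\le (A_r-B_r)-tb\,\}$, whose apex $(0,A_r-B_r)$ is its unique topmost point.

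First I would record the two push-off identities and the resulting inequalities, which is immediate. Then I would invoke the representative constructed at the end of the proof of Lemma~\ref{tblemma}, the one with $tb=A_r-B_r$ and $r=0$: it realizes the apex, and since $tb\pm r=A_r-B_r=-\chi$ there, both of its transverse push-offs saturate Bennequin. In particular its positive push-off realizes $\overline{sl}$, so the line $tb-r=A_r-B_r$ is exactly the outer left slope of the range, and --- either by the symmetry of the range about $r=0$, or in a self-contained way by the Bennequin bound on the negative push-off of this same representative --- the line $tb+r=A_r-B_r$ is the outer right slope. The two slopes meet only at $(0,A_r-B_r)$, so no second local maximum can occur: the range is topped by this single point and otherwise confined to the triangle beneath it, which is the single-peak statement. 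The finer fact that at level $tb=A_r-B_r-k$ only $r\in\{-k,-k+2,\dots,k\}$ arises then follows from the triangle bound together with the parity constraint, stabilization changing $(r,tb)$ by $(\pm1,-1)$.

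The only point needing care is the symmetry of the mountain range about the $r=0$ axis. I would obtain it either from invertibility of iterated torus knot types --- reversing the orientation of a Legendrian knot fixes $tb$, negates $r$, and preserves the unoriented isotopy class --- or, to avoid appealing to that, simply by applying the Bennequin inequality to the negative transverse push-off of the explicit $r=0$ representative above, which produces the right-slope inequality directly. There is otherwise essentially no obstacle: the corollary is only a statement about the boundary of the range and the location of its apex, both of which are forced by Lemma~\ref{tblemma}. The stronger assertion that every lattice point of the peak is realized, by a \emph{unique} isotopy class, is the separate content of Theorem~\ref{main theorem} and is not needed here.
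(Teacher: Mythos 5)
Your proposal is correct and takes essentially the same route as the paper: both arguments rest on Lemma \ref{tblemma} (that $\overline{sl}(\breve{K}_r)=\overline{tb}(\breve{K}_r)=A_r-B_r=-\chi(\breve{K}_r)$) together with the explicit representative at $(r,tb)=(0,A_r-B_r)$, which pins the apex of the range. The only minor difference is that you obtain the right-hand edge by applying the Bennequin bound to the negative transverse push-off (equivalently $tb+|r|\le-\chi$), whereas the paper gets it from the left edge $tb-r=\overline{sl}$ together with the symmetry of the mountain range about $r=0$.
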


The following will thus suffice to prove that $\breve{K}_r$ is Legendrian simple:

\begin{itemize}
\item[1.]  Show that there is a unique Legendrian isotopy class at $\overline{tb}=A_r-B_r$.
\item[2.]  Show that if $tb(L_r) < A_r-B_r$, then $L_r$ Legendrian destabilizes.
\end{itemize}

Recall from the work of Etnyre and Honda that a convenient way to find destabilizations of Legendrian knots embedded in tori is to find bypasses attached to these tori.  These bypasses can be found on either the interior or exterior of the solid tori, but with possible restrictions due to the failure of the UTP.  Thus, before we can prove Theorem \ref{main theorem}, we must turn our attention to the thickening of solid tori.

\section{Necessary conditions for solid tori $\breve{N}_r$ that do not thicken}

We begin with two new definitions that will be useful in this section.

\begin{definition}
{\em Let $N$ be a solid torus with convex boundary in standard form, and with $\textrm{slope}(\Gamma_{\partial N})=a/b$ in some framing.  If $|2b|$ is the geometric intersection of the dividing set $\Gamma$ with a longitude ruling in that framing, then we will call $a/b$ the {\em intersection boundary slope}}.
\end{definition}

Note that when we have an intersection boundary slope $a/b$, then $2\textrm{gcd}(a,|b|)$ is the number of dividing curves.

\begin{definition}
{\em For $r \geq 1$ and nonnegative integer $k$, define $N_r^k$ to be any solid torus representing $\breve{K}_r$ with intersection boundary slope of $-(k+1)/(A_rk+B_r)$, as measured in the $\mathcal{C}'$ framing.  Also define the integer $n_r^k :=\textrm{gcd}((k+1),(A_rk+B_r))$.}
\end{definition}

Note that $N_r^k$ has $2n_r^k$ dividing curves.

We will show that any solid torus $N_r$ representing $\breve{K}_r$ can be thickened to an $N_r^k$ for some nonnegative integer $k$, and that any solid torus with the same boundary slope as $N_r^k$ which fails to thicken must have at least $2n_r^k$ dividing curves.  Another way of saying this is that every solid torus $N_r$ is contained in some $N_r^k$, and that if $N_r$ fails to thicken, then boundary slopes do not change in passing to the $N_r^k \supset N_r$, although the number of dividing curves may decrease.

Our analysis proceeds by induction, where the base case is positive torus knots.  The following lemma is proved for the $(2,3)$ torus knot in \cite{[EH1]}, and there it is noted that there is a corresponding lemma for a positive $(p,q)$ torus knot.  However, the calculation is not explicitly provided, so for completeness we prove the general lemma here.

\begin{lemma}
\label{basecasethickening}
Let $N$ be a solid torus with core $\breve{K}_1 = (p,q)$ where $p,q > 1$ and co-prime.  Then $N$ can be thickened to an $N_1^k$ for some nonnegative integer $k$.  Moreover, if $N$ fails to thicken, then it has the same boundary slope as some $N_1^k$, as well as at least $2n_1^k$ dividing curves.
\end{lemma}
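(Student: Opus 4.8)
The plan is to realize $\breve{K}_1=(p,q)$ on the genus-one Heegaard torus of $S^3$ and run the convex-surface argument that Etnyre--Honda carry out for the trefoil in \S 4 of \cite{[EH1]}, checking that the edge-rounding computations go through with the general numerology recorded in \S 3. Write $S^3=V_1\cup_T V_2$ as a genus-one Heegaard splitting and realize $\breve{K}_1$ as a curve on $T$. Given a solid torus $N$ representing $\breve{K}_1$ with convex boundary in standard form (and boundary slope not equal to $\infty'$, a case in which $N$ is readily thickened), I would isotope $N$ so that $T\cap N$ is an annular neighborhood of $\breve{K}_1$ in $T$, with $T$ meeting $\partial N$ in two curves of slope $\infty'$; by Giroux flexibility these are Legendrian rulings, so $\mathcal{A}:=T\setminus\textrm{int}\,N$ is an annulus with Legendrian boundary, which I make convex, while $W_i:=V_i\setminus\textrm{int}\,N$ for $i=1,2$ are solid tori with $\partial W_i$ the union of $\mathcal{A}$ and an annulus $\mathcal{A}_i\subset\partial N$; I would make each $\partial W_i$ convex and apply to $W_1$ and $W_2$ the classification of tight contact structures on solid tori from \cite{[H]}.

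First I would bound the boundary slope. If $\textrm{slope}(\Gamma_{\partial N})=a/b$ in $\mathcal{C}'$ with $b\ne 0$, then the $\infty'$-ruling on $\partial N$ is a Legendrian representative of $\breve{K}_1$ of twisting $-|b|$ with respect to $\mathcal{C}'$, so by equation \ref{5} together with Lemmas \ref{Pr} and \ref{tblemma} we get $-|b|\le\overline{t}(\breve{K}_1)=\overline{tb}(\breve{K}_1)-A_1=-B_1$; hence $|b|\ge B_1$, and in fact $\textrm{slope}(\Gamma_{\partial N})\in[-1/B_1,-1/A_1)$, since a $\mathcal{C}'$-slope in $(-1/B_1,0)$ belongs to a standard neighborhood of a non-maximal Legendrian knot and thickens. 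Now for the thickening step: if $\mathcal{A}$ is not standard convex, an outermost boundary-parallel dividing arc of $\mathcal{A}$ gives a bypass attached to $\partial N$ from outside $N$, and by the bypass-attachment results in \S 3 of \cite{[H]} this bypass is either trivial or produces $N'\supset N$ whose boundary slope is strictly closer to $-1/A_1$ than that of $\partial N$ -- a genuine thickening -- after which I replace $N$ by $N'$ and repeat. Likewise, whenever some $W_i$ fails to carry the minimally twisting tight structure compatible with the boundary data imposed by $\mathcal{A}$, the solid-torus classification produces a convex vertical annulus in $W_i$ with a boundary-parallel dividing arc on its $\partial N$-edge, and the resulting bypass again thickens $N$. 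Because the boundary slope is confined to $[-1/B_1,-1/A_1)$ and each genuine thickening moves it monotonically toward $-1/A_1$, a standard argument (as in \cite{[EH1]}) shows this process terminates.

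When it terminates, $\mathcal{A}$ is standard convex and $W_1,W_2$ are minimally twisting for the boundary data imposed by $\mathcal{A}$. I would then edge-round the decomposition $\partial N=\mathcal{A}_1\cup\mathcal{A}_2$ along $\mathcal{A}$ and convert to $\mathcal{C}'$ using $A_1=P_1q_1=pq$, $B_1=p+q$, and the $\mathcal{C}'$-to-$\mathcal{C}$ change of coordinates following Lemma \ref{Pr}, arriving at $\textrm{slope}(\Gamma_{\partial N})=-(k+1)/(A_1k+B_1)$ for the nonnegative integer $k$ recording the twisting parameter of the tight structure on, say, $W_2$; this is exactly the intersection boundary slope of $N_1^k$, so $N$ has been thickened to an $N_1^k$. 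For the final assertion, if $N$ itself fails to thicken then the convex annuli above must already have been standard convex and the $W_i$ already minimally twisting, so the same computation applies to $N$ directly and shows $\textrm{slope}(\Gamma_{\partial N})=-(k+1)/(A_1k+B_1)$ for some $k\ge 0$; moreover $\Gamma_{\partial N}$ must meet each $\mathcal{A}_i$ in the number of arcs dictated by the minimally twisting $W_i$, and edge-rounding then forces $\partial N$ to carry at least $2\,\textrm{gcd}(k+1,A_1k+B_1)=2n_1^k$ dividing curves.

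The main obstacle will be the bookkeeping in this last paragraph: pinning down the precise relationship among the twisting parameter $k$ in the solid-torus classification for $W_2$ (and the matching data on $W_1$), the slope $-(k+1)/(A_1k+B_1)$ obtained by edge-rounding, and the lower bound $2n_1^k$ on the number of dividing curves, and relatedly verifying that when $N$ is non-thickenable no bypass can be hidden inside $W_1$ or $W_2$. All the conceptual ingredients are already in \cite{[H]} and \cite{[EH1]}; what is genuinely new is the arithmetic, which reduces to the identities of \S 3 together with a careful edge-rounding count, exactly as in the trefoil case treated in \cite{[EH1]}.
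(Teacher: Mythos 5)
Your decomposition of the complement along the Heegaard torus into $W_1\cup_{\mathcal{A}}W_2$ is reasonable in spirit, but two steps you lean on are incorrect, and the part you postpone as ``bookkeeping'' is the actual content of the lemma. First, the claimed confinement of $\textrm{slope}(\Gamma_{\partial N})$ to $[-1/B_1,-1/A_1)$ is false: the twisting bound only gives $|b|\geq B_1$ (per pair of dividing curves), and every slope in $[-1/B_1,0)$ actually occurs --- a standard neighborhood of a stabilized Legendrian has slope $-1/m$ with $m$ arbitrarily large, which lies in $[-1/A_1,0)$ once $m\geq A_1$, and slopes need not have numerator $1$. You also cannot discard such slopes ``because they thicken'': the lemma is precisely the assertion that \emph{every} $N$, thickenable or not, thickens into some $N_1^k$, so no class of tori may be set aside a priori. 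Second, the termination argument is unsound: a nontrivial outside bypass does not move the boundary slope monotonically toward $-1/A_1$ (if anything, maximal thickenings move toward $-1/B_1$, i.e.\ toward $k=0$, as in the inductive step of Lemma \ref{nonthickening inductive step}); even if it did, monotone motion toward $-1/A_1$ would not terminate, since the candidate slopes $-(k+1)/(A_1k+B_1)$ accumulate at $-1/A_1$; and trivial bypasses can recur indefinitely, so ``repeat until standard convex'' needs a genuine potential function, which your scheme does not supply.

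Compare this with what the proof actually uses: Legendrian representatives $L_1,L_2$ of the two core unknots $F_1,F_2$ of the Heegaard solid tori, chosen to maximize $tb(L_i)=-m_i$ in $S^3\setminus N$. Since $tb$ of an unknot is bounded above by $-1$ and each bypass found by the Imbalance Principle on a convex annulus between $\partial N(L_1)$ and $\partial N(L_2)$ increases some $tb(L_i)$ by one, the process terminates --- that is the missing potential function. Balancing the twisting of the $(p,q)$ rulings on the two unknot neighborhoods forces the Diophantine condition $qm_1+p=pm_2+q$, whose solutions $m_1=pk+1$, $m_2=qk+1$ are exactly what produce, after edge-rounding $\partial\bigl(N(L_1)\cup N(\mathcal{A})\cup N(L_2)\bigr)$, the slopes $-(k+1)/(A_1k+B_1)$; and the lower bound of $2n_1^k$ dividing curves for a non-thickenable $N$ comes from a second application of the Imbalance Principle, comparing $(p,q)$ rulings on $\partial N(L_i)$ (which meet the dividing set exactly $2(A_1k+B_1)$ times) with $\infty'$ rulings on $\partial N$. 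In your proposal the coupling between the twisting data of $W_1$ and $W_2$, the resulting arithmetic condition, and the dividing-curve count are exactly the items deferred to the last paragraph, so the conclusion of the lemma --- that only the slopes $-(k+1)/(A_1k+B_1)$ with at least $2n_1^k$ dividing curves can obstruct thickening --- is never actually derived.
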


\begin{proof}
We first construct the setting.  Let $T$ be a torus which bounds solid tori $V_1$ and $V_2$ on both sides in $S^3$, and which contains a $(p,q)$ torus knot $\breve{K}_1$.  We will think of $T=\partial V_1$ and $T=-\partial V_2$.  Let $F_i$ be the core unknots for $V_i$.  We know $\overline{tb}(\breve{K}_1)=pq-p-q$ (see \cite{[EH2]}); measured with respect to the coordinate system $\mathcal{C}'$, for either $i$, $\overline{t}(\breve{K}_1)=-p-q$.

Now let $L_i$, $i=1,2$, be a Legendrian representative of $F_i$ with $tb=-m_i$, where $m_i > 0$ (recall that $\overline{tb}(\textrm{unknot})=-1$).  If $N(L_i)$ is a regular neighborhood of $L_i$, then $\textrm{slope}(\Gamma_{\partial N(L_i)})=-1/m_i$ with respect to $\mathcal{C}_{F_i}$.  

Consider an oriented basis $((p,q),(p',q'))$ for $T$, where $pq'-qp'=1$; we map this to $((0,1),(-1,0))$ in a new framing $\mathcal{C}''$.  This corresponds to the map $\displaystyle \Phi_1 = \left(\begin{array}{cc}
														q &-p\\
														q' &-p'\end{array}\right)$.  Then $\Phi_1$ maps $(-m_1,1) \mapsto (-qm_1-p, -q'm_1-p')$.  Since we are only interested in slopes, we write this as $(qm_1+p, q'm_1+p')$.

Similarly, we change from $\mathcal{C}_{F_2}$ to $\mathcal{C}''$.  The only thing we need to know here is that $(-m_2,1)$ maps to $(pm_2+q,p'm_2+q')$.

This concludes the construction of the setting; we can now prove the lemma.  Let $N$ be a solid torus representing $\breve{K}_1$.  Let $L_i$ be Legendrian representatives of $F_i$ which maximize $tb(L_i)$ in the complement of $N$, subject to the condition that $L_1 \sqcup L_2$ is isotopic to $F_1 \sqcup F_2$ in the complement of $N$.

Now suppose $qm_1+p \neq pm_2 + q$.  This would mean that the twisting of Legendrian ruling representatives of $\breve{K}_1$ on $\partial N(L_1)$ and $\partial N(L_2)$ would be unequal.  Then we could apply the Imbalance Principle (see Proposition 3.17 in \cite{[H]}) to a convex annulus $\mathcal{A}$ in $S^3 \backslash N$ between $\partial N(L_1)$ and $\partial N(L_2)$ to find a bypass along one of the $\partial N(L_i)$.  This bypass in turn gives rise to a thickening of $N(L_i)$, allowing the increase of $tb(L_i)$ by one (see Lemma 4.4 in \cite{[H]}).  Hence, eventually we arrive at $qm_1+p = pm_2 + q$ and a standard convex annulus $\mathcal{A}$.

Since $m_i > 0$, the smallest solution to $qm_1+p = pm_2 + q$ is $m_1=m_2=1$.  All the other positive integer solutions are therefore obtained by taking $m_1=pk+1$ and $m_2=qk+1$ with $k$ a nonnegative integer.  We can then compute the intersection boundary slope of the dividing curves on $\partial(N(L_1) \cup N(L_2) \cup \mathcal{A})$, measured with respect to $\mathcal{C}'$, after edge-rounding.  This will be the intersection boundary slope for $\widetilde{N} \supset N$.  We have:

\begin{equation}
-\frac{q'(pk+1)+p'}{pqk+p+q} + \frac{p'(qk+1)+q'}{pqk+p+q}-\frac{1}{pqk+p+q} = -\frac{k+1}{pqk+p+q}=-\frac{k+1}{A_1k+B_1}
\end{equation}

This shows that any $N$ thickens to some $N_1^k$, and if $N$ fails to thicken, then it has the same boundary slope as some $N_1^k$.  Suppose, for contradiction, that $N$ fails to thicken and has $2n$ dividing curves, where $n < n_1^k$.  Then using the construction above we know that outside of $N$ in $S^3$ are neighborhoods of the two Legendrian unknots $L_i$ with $\breve{K}_1$ rulings that intersect the dividing set on $\partial N(L_i)$ exactly $2(A_1k+B_1)$ number of times.  However, since $n < n_1^k$, the $\infty'$ rulings on $N$ intersect the dividing set less than $2(A_1k+B_1)$ number of times.  Thus by the Imbalance Principle there exists bypasses off of the $\breve{K}_1$ rulings on the $\partial N(L_i)$, and so the $L_i$ can destabilize in the complement of $N$ to smaller $k$-value, allowing for a slope-changing thickening of $N$.  This is a contradiction. 
\end{proof}


We now can prove the following general result by induction using the above lemma as our base case:

\begin{lemma}
\label{nonthickening inductive step}
Let $N_r$ be a solid torus representing $\breve{K}_r$, for $r \geq 1$.  Then $N_r$ can be thickened to an $N_r^k$ for some nonnegative integer $k$.  Moreover, if $N_r$ fails to thicken, then it has the same boundary slope as some $N_r^k$, as well as at least $2n_r^k$ dividing curves.
\end{lemma}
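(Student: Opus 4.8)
The plan is to argue by induction on $r$, with Lemma \ref{basecasethickening} furnishing the base case $r=1$. Fix $r\ge 2$, assume the statement for $\breve{K}_{r-1}$, and let $N_r$ be a solid torus representing $\breve{K}_r$ with convex boundary in standard form. The first step is to pass to a ``parent'' solid torus carrying $\breve{K}_{r-1}$: after using Giroux flexibility to arrange $\infty'$ Legendrian rulings on $\partial N_r$ (the coincidence $\textrm{slope}(\Gamma_{\partial N_r})=\infty'$ is degenerate and is handled directly, since then a cabling annulus thickens $N_r$ at once), choose a convex annulus $\mathcal{A}$ from $\partial N_r$ to itself along two such rulings so that one component of $\partial N_r\setminus\partial\mathcal{A}$, together with $\mathcal{A}$, bounds a solid torus $\widehat{N}_{r-1}\supset N_r$ with core $\breve{K}_{r-1}$. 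Applying the inductive hypothesis to $\widehat{N}_{r-1}$, we thicken it to some $N_{r-1}^j$, so henceforth $N_r\subset N_{r-1}^j$, where $\partial N_{r-1}^j$ is convex with boundary slope $-(j+1)/(A_{r-1}j+B_{r-1})$ in $\mathcal{C}'$ and $2n_{r-1}^j$ dividing curves.

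The heart of the proof is to thicken $N_r$ maximally \emph{inside} $N_{r-1}^j$. Take a Legendrian core $L_{r-1}$ of $N_{r-1}^j$ lying in the complement of $N_r$ and of maximal twisting there, together with a convex annulus $\widetilde{\mathcal{A}}$ from $\partial N_{r-1}^j$ to $\partial N(L_{r-1})$ whose boundary components are Legendrian rulings representing $\breve{K}_r$ (slope $q_r/p_r$ in $\mathcal{C}'$); arguing as usual, any boundary-parallel dividing arc on $\widetilde{\mathcal{A}}$ would either contradict maximality of the twisting of $L_{r-1}$ or thicken $N_r$ further, so $\widetilde{\mathcal{A}}$ may be taken standard convex and $\widetilde{N}_r:=N_{r-1}^j\setminus\big(N(L_{r-1})\cup N(\widetilde{\mathcal{A}})\big)$ is, after edge-rounding, the maximal thickening of $N_r$ inside $N_{r-1}^j$. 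A computation in the spirit of the one ending the proof of Lemma \ref{basecasethickening} — using $A_r=P_rq_r$, $P_r=q_rA_{r-1}+p_r$, $B_r=q_rB_{r-1}+p_r$ from Lemma \ref{Pr} and \ref{ArBrrecursive}, and the fact that on $\partial N(\breve{K}_r)$ the change from $\mathcal{C}'$ to $\mathcal{C}$ is $\mu\mapsto\mu+A_r\lambda$ — should identify $\textrm{slope}(\Gamma_{\partial\widetilde{N}_r})$ as $-(k+1)/(A_rk+B_r)$ for a specific nonnegative integer $k$ (one anticipates $k\approx j/q_r$), that is, $\widetilde{N}_r=N_r^k$. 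Here I would invoke the already-established facts $P_r>0$ and $A_r>B_r>0$ for $\breve{K}_r$ — which is exactly where the hypothesis $q_r/p_r\notin(-1/B_{r-1},0)$ is used — to ensure that the geometric intersection numbers of the $\breve{K}_r$-rulings with $\Gamma_{\partial N_{r-1}^j}$, hence the twisting numbers feeding the edge-rounding formula, carry the signs that make the computation valid. Since $N_r\subset\widetilde{N}_r=N_r^k$, this already shows both that $N_r$ thickens to an $N_r^k$ and (by the definition of failing to thicken) that a non-thickening $N_r$ has the same boundary slope as some $N_r^k$.

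It remains to bound the number of dividing curves of a non-thickening $N_r$. Suppose $N_r$ fails to thicken, has boundary slope that of $N_r^k$, and has $2n$ dividing curves with $n<n_r^k$. Then an $\infty'$-ruling on $\partial N_r$ meets $\Gamma_{\partial N_r}$ in $2n(A_rk+B_r)/n_r^k<2(A_rk+B_r)$ points, whereas a $\breve{K}_r$-ruling on $\partial N_{r-1}^j$ meets $\Gamma_{\partial N_{r-1}^j}$ in at least $2(A_rk+B_r)$ points; applying the Imbalance Principle (Proposition 3.17 in \cite{[H]}) to a convex annulus between $\partial N_{r-1}^j$ and $\partial N_r$ with $\breve{K}_r$-ruling and $\infty'$-ruling boundary yields a bypass on the $\partial N_{r-1}^j$-side. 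This bypass destabilizes $L_{r-1}$ in the complement of $N_r$, lowering the controlling integer and hence exhibiting a thickening of $N_r$ with a different boundary slope — contradicting the hypothesis that $N_r$ fails to thicken. Therefore $n\ge n_r^k$.

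The step I expect to be the main obstacle is the middle one: showing that every maximal thickening of $N_r$ inside $N_{r-1}^j$ has boundary slope exactly of the form $-(k+1)/(A_rk+B_r)$ with $k\in\mathbb{Z}$, $k\ge 0$ — in effect, that cabling the potentially-non-thickening tori $N_{r-1}^j$ of $\breve{K}_{r-1}$ reproduces precisely the family $N_r^k$ one level up, with no spurious intermediate slopes surviving. This requires both the careful edge-rounding and change-of-coordinates bookkeeping and a check that whenever the ``naive'' cabled slope fails to be of this form a further bypass is available to push $N_r$ on to the next $N_r^k$; the remaining technicalities (the degenerate ruling/dividing-slope coincidence, and verifying that $\widehat{N}_{r-1}$ is genuinely a solid torus with core $\breve{K}_{r-1}$ so that the inductive hypothesis applies) are routine.
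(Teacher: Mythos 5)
Your skeleton matches the paper's (induction, pass to a companion solid torus, thicken it by the inductive hypothesis, run a $(p_r,q_r)$-annulus between its boundary and a maximal-twisting Legendrian core, edge-round to get $-(k+1)/(A_rk+B_r)$, then an Imbalance argument for the dividing-curve count), but the two places you wave at are exactly where the real content lies, and as written both arguments fail. First, the standard convexity of $\widetilde{\mathcal{A}}$: a boundary-parallel dividing arc on the $\partial N_{r-1}^j$-edge does \emph{not} ``thicken $N_r$ further'' -- attaching such a bypass from the inside thins the outer torus; it never touches $\partial N_r$. The paper's proof handles precisely this point by (i) observing that an isolated bypass on that edge would slide across $\widetilde{\mathcal{A}}$ and produce a forbidden bypass on the $\partial N(L_{r-1})$-edge, so outer-edge bypasses occur only in cancelling pairs, and (ii) choosing the thickening of the companion torus with \emph{minimal} $k_{r-1}$, so that absorbing a cancelling pair (same slope, fewer dividing curves) lets the inductive hypothesis produce a thickening to a strictly smaller $k$-value, a contradiction. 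You impose no minimality on $j$ and have no replacement for this mechanism, so you cannot conclude $\widetilde{\mathcal{A}}$ is standard convex; note also that the integrality of $k$ (the divisibility $q_r\mid k_{r-1}$ forced by equating intersection numbers across a \emph{standard convex} annulus, since $\gcd(p_r,q_r)=1$) only comes out once that convexity is in hand. Separately, the paper uses the hypothesis $q_r/p_r\notin(-1/B_{r-1},0)$ to meet the conditions of Honda's destabilization lemma when excluding bypasses on the $\partial N(L_{r-1})$-edge, not (as you suggest) merely to fix signs in the slope computation.

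Second, your dividing-curve bound does not go through. The Imbalance Principle applied to an annulus from $\partial N_{r-1}^j$ to $\partial N_r$ puts the bypass on the $\partial N_{r-1}^j$-side; that bypass does not destabilize $L_{r-1}$ (whose neighborhood is not an edge of this annulus), and thinning $N_{r-1}^j$ through it yields no slope-changing thickening of $N_r$ without further argument. Moreover your count ``a $\breve{K}_r$-ruling on $\partial N_{r-1}^j$ meets $\Gamma$ in at least $2(A_rk+B_r)$ points'' is unjustified, since there is no a priori relation between the level $j$ of the companion thickening and the level $k$ of the slope of $N_r$. This is essentially the base-case argument transplanted one level up, and it is exactly what the paper avoids: instead, for a non-thickenable $N_r$ with the minimal number of dividing curves, the paper proves that the associated companion torus $N_{r-1}=N_r\cup N(\mathcal{A}_{(p_r,q_r)})\cup N(L_{r-1})$ itself fails to thicken, via a two-case analysis (tracking holonomy of the four boundary annuli $\widetilde{\mathcal{A}}_\pm$, $\mathcal{A}_{r-1}$, $\mathcal{A}_{L_{r-1}}$ under a hypothetical slope-changing thickening), then applies the inductive hypothesis to that torus and only afterwards uses Imbalance (with bypasses that would induce bypasses off the $\infty'$ rulings of $N_r$) to pin down $n=n_{r-1}^{k_{r-1}}$ and hence the intersection boundary slope of $N_r$. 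You need either this holonomy argument or a genuine substitute for it; as it stands, both the middle step you flagged and the final count are gaps.
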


\begin{proof}
Inductively we can assume that the lemma is true for solid tori $N_{r-1}$ representing $\breve{K}_{r-1}$.  Let $N_r$ be a solid torus representing $\breve{K}_r$.  Let $L_{r-1}$ be a Legendrian representative of $\breve{K}_{r-1}$ in $S^3\backslash N_r$ and such that we can join $\partial N(L_{r-1})$ to $\partial N_r$ by a convex annulus $\mathcal{A}_{(p_r,q_r)}$ whose boundaries are $(p_r,q_r)$ and $\infty'$ rulings on $\partial N(L_{r-1})$ and $\partial N_r$, respectively.  Then topologically isotop $L_{r-1}$ in the complement of $N_r$ so that it maximizes $tb$ over all such isotopies; this will induce an ambient topological isotopy of $\mathcal{A}_{(p_r,q_r)}$, where we still can assume $\mathcal{A}_{(p_r,q_r)}$ is convex.  In the $\mathcal{C}'$ framing we will have $\textrm{slope}(\Gamma_{\partial N(L_{r-1})})=-1/m$ where $m > 0$, since $\overline{t}(\breve{K}_{r-1})=-B_{r-1} < 0$.  Now if $m=B_{r-1}$, then there will be no bypasses on the $\partial N(L_{r-1})$-edge of $\mathcal{A}_{(p_r,q_r)}$, since the $(p_r,q_r)$ ruling would be at maximal twisting.  On the other hand, if $m > B_{r-1}$, then there will still be no bypasses on the $\partial N(L_{r-1})$-edge of $\mathcal{A}_{(p_r,q_r)}$, since such a bypass would induce a destabilization of $L_{r-1}$, thus increasing its $tb$ by one -- see Lemma 4.4 in \cite{[H]}.  To satisfy the conditions of this lemma, we are using the fact that either $p_r > 0$ or $q_{r}/p_{r} < -1/B_{r-1}$.  Furthermore, we can thicken $N_r$ through any bypasses on the $\partial N_r$-edge, and thus assume $\mathcal{A}_{(p_r,q_r)}$ is standard convex.  See (a) in Figure \ref{fig:non-thickening1B}.

\begin{figure}[htbp]
\label{nonthickening1}
	\centering
		\includegraphics[width=.85\textwidth]{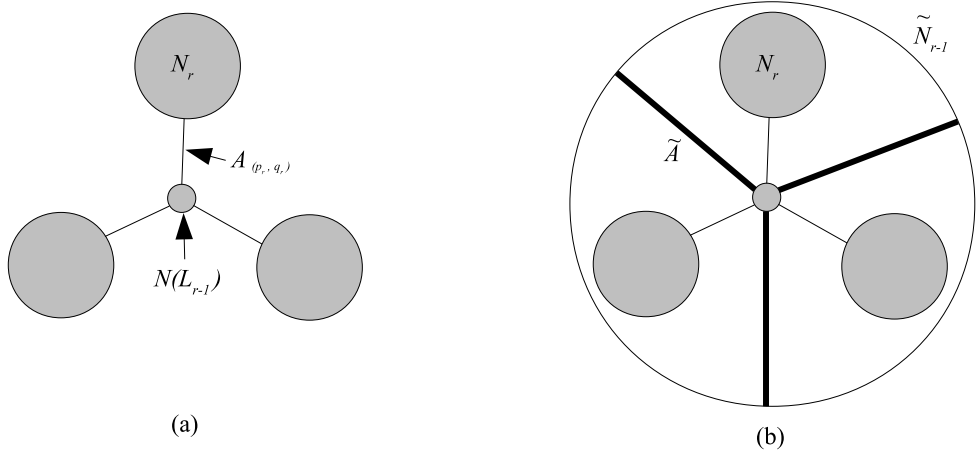}
	\caption{\small{$N_r$ is the larger solid torus in gray; $N(L_{r-1})$ is the smaller solid torus in gray.}}
	\label{fig:non-thickening1B}
\end{figure}

Now let $N_{r-1}:=N_r \cup N(\mathcal{A}_{(p_r,q_r)}) \cup N(L_{r-1})$.  By our inductive hypothesis we can thicken $N_{r-1}$ to an $\widetilde{N}_{r-1}$ with intersection boundary slope $-(k_{r-1} + 1)/(A_{r-1}k_{r-1} + B_{r-1})$, and we can assume that $k_{r-1}$ is minimized for all such thickenings.  Then consider a convex annulus $\mathcal{\widetilde{A}}$ from $\partial N(L_{r-1})$ to $\partial \widetilde{N}_{r-1}$, such that $\mathcal{\widetilde{A}}$ is in the complement of $N_r$ and $\partial \mathcal{\widetilde{A}}$ consists of $(p_r,q_r)$ rulings.  See (b) in Figure \ref{fig:non-thickening1B}.  We will show that $\mathcal{\widetilde{A}}$ is standard convex.  Certainly there are no bypasses on the $\partial N(L_{r-1})$-edge of $\mathcal{\widetilde{A}}$; furthermore, any bypasses on the $\partial \widetilde{N}_{r-1}$-edge must pair up via dividing curves on $\partial \widetilde{N}_{r-1}$ and cancel each other out as in part (a) of Figure \ref{nonthickening2}, for otherwise a bypass on $\partial N(L_{r-1})$ would be induced via the annulus $\mathcal{\widetilde{A}}$ as in part (b) of Figure \ref{nonthickening2}.  As a consequence, allowing $\widetilde{N}_{r-1}$ to thin inward through such bypasses does not change the boundary slope, but just reduces the number of dividing curves.  But then inductively we can thicken this new $\widetilde{N}_{r-1}$ to a smaller $k_{r-1}$-value, contradicting the minimality of $k_{r-1}$.  Thus $\mathcal{\widetilde{A}}$ is standard convex.

\begin{figure}[htbp]
\label{nonthickening2}
	\centering
		\includegraphics[width=0.80\textwidth]{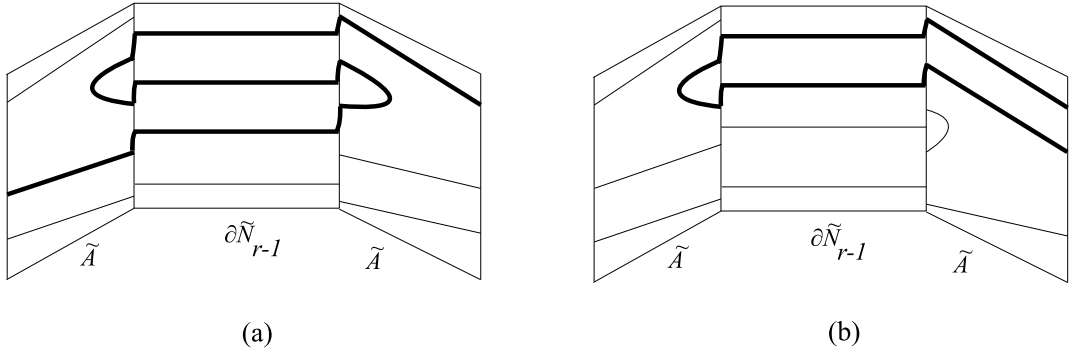}
	\caption{\small{Part (a) shows bypasses that cancel each other out after edge-rounding.  Part (b) shows a bypass induced on $\partial N(L_{r-1})$ via $\widetilde{\mathcal{A}}$.}}
	\label{fig:non-thickening2B}
\end{figure}

Now four annuli compose the boundary of a solid torus $\widetilde{N}_r$ containing $N_r$:  the two sides of a thickened $\mathcal{\widetilde{A}}$; $\partial \widetilde{N}_{r-1} \backslash \partial \mathcal{\widetilde{A}}$; and $\partial N(L_{r-1}) \backslash \partial \mathcal{\widetilde{A}}$.  We can compute the intersection boundary slope of this solid torus.  To this end, recall that $\textrm{slope}(\Gamma_{\partial N(L_{r-1})})=-1/m$ where $m > 0$.  To determine $m$ we note that the geometric intersection of $(p_r,q_r)$ with $\Gamma$ on $\partial \widetilde{N}_{r-1}$ and $\partial N(L_{r-1})$ must be equal, yielding the equality

\begin{equation}
p_r + mq_r = p_rk_{r-1} + p_r + q_r(A_{r-1}k_{r-1} + B_{r-1})
\end{equation}

This gives

\begin{equation}
m=p_r\frac{k_{r-1}}{q_r}+A_{r-1}k_{r-1} + B_{r-1}
\end{equation}

We define the integer $k_r:=k_{r-1}/q_r$.  We now choose $(p'_r,q'_r)$ to be a curve on these two tori such that $p_rq'_r-p'_rq_r=1$, and as in Lemma \ref{basecasethickening}, we change coordinates to $\mathcal{C}''$ via the map $((p_r,q_r),(p'_r,q'_r)) \mapsto ((0,1),(-1,0))$.  Under this map we obtain 
\begin{equation}
\textrm{slope}(\Gamma_{\partial \widetilde{N}_{r-1}}) = \frac{q'_r(A_{r-1}k_{r-1}+B_{r-1})+p'_r(q_rk_r+1)}{A_rk_r+B_r}
\end{equation}

\begin{equation}
\textrm{slope}(\Gamma_{\partial N(L_{r-1})})=\frac{q'_r(p_rk_r +A_{r-1}k_{r-1}+B_{r-1}) +p'_r}{A_rk_r+B_r}
\end{equation}

We then obtain in the $\mathcal{C}'$ framing, after edge-rounding, that the intersection boundary slope of $\widetilde{N}_r$ is

\begin{eqnarray}
\textrm{slope}(\Gamma_{\partial \widetilde{N}_r})&=&\frac{q'_r(A_{r-1}k_{r-1}+B_{r-1})+p'_r(q_rk_r+1)}{A_rk_r+B_r}\nonumber\\
																			&-&\frac{q'_r(p_rk_r +A_{r-1}k_{r-1}+B_{r-1}) +p'_r}{A_rk_r+B_r}\nonumber\\
																			&-&\frac{1}{A_rk_r+B_r}\nonumber\\
																			&=&-\frac{k_r+1}{A_rk_r+B_r}
\end{eqnarray}

This shows that any $N_r$ representing $\breve{K}_r$ can be thickened to one of the $N_r^k$, and if $N_r$ fails to thicken, then it has the same boundary slope as some $N_r^k$.  We now show that if $N_r$ fails to thicken, and if it has the minimum number of dividing curves over all such $N_r$ which fail to thicken and have the same boundary slope as $N_r^k$, then $N_r$ is actually an $N_r^k$.  

To see this, as above we can choose a Legendrian $L_{r-1}$ that maximizes $tb$ in the complement of $N_r$ and such that we can join $\partial N(L_{r-1})$ to $\partial N_r$ by a convex annulus $\mathcal{A}_{(p_r,q_r)}$ whose boundaries are $(p_r,q_r)$ and $\infty'$ rulings on $\partial N(L_{r-1})$ and $\partial N_r$, respectively.  Again we have no bypasses on the $\partial N(L_{r-1})$-edge, and in this case we have no bypasses on the $\partial N_r$-edge since $N_r$ fails to thicken and is at minimum number of dividing curves.

As above, let $N_{r-1}:=N_r \cup N(\mathcal{A}_{(p_r,q_r)}) \cup N(L_{r-1})$.  We claim this $N_{r-1}$ fails to thicken.  To see this, take a convex annulus $\mathcal{\widetilde{A}}$ from $\partial N(L_{r-1})$ to $\partial N_{r-1}$, such that $\mathcal{\widetilde{A}}$ is in the complement of $N_r$ and $\partial \mathcal{\widetilde{A}}$ consists of $(p_r,q_r)$ rulings.  We know $\mathcal{\widetilde{A}}$ is standard convex since the twisting is the same on both edges and there are no bypasses on the $\partial N(L_{r-1})$-edge.  A picture is shown in Figure \ref{fig:non-thickening3BB}.

\begin{figure}[htbp]
	\centering
		\includegraphics[width=0.40\textwidth]{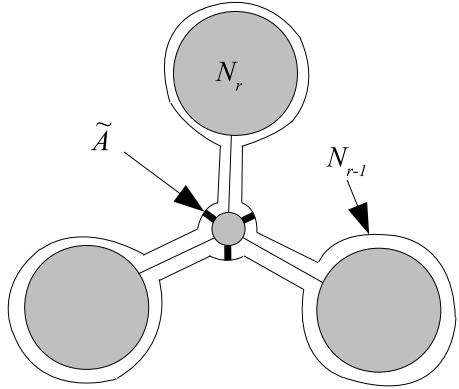}
	\caption{\small{Shown is a meridional cross-section of $N_{r-1}$.  The larger gray solid torus represents $N_r$; the smaller gray solid torus is $N(L_{r-1})$.}}
	\label{fig:non-thickening3BB}
\end{figure}

Now four annuli compose the boundary of a solid torus containing $N_r$:  the two sides of the thickened $\mathcal{\widetilde{A}}$, which we will call $\mathcal{\widetilde{A}}_+$ and $\mathcal{\widetilde{A}}_-$; $\partial N_{r-1} \backslash \partial \mathcal{\widetilde{A}}$, which we will call $\mathcal{A}_{r-1}$; and $\partial N(L_{r-1}) \backslash \partial \mathcal{\widetilde{A}}$, which we will call $\mathcal{A}_{L_{r-1}}$.  Any thickening of $N_{r-1}$ will induce a thickening of $N_r$ to $\widetilde{N}_r$ via these four annuli.

Suppose, for contradiction, that $N_{r-1}$ thickens outward so that $\textrm{slope}(\Gamma_{\partial N_{r-1}})$ changes.  Note that during the thickening, $\mathcal{A}_ {L_{r-1}}$ stays fixed.  We examine the rest of the annuli by breaking into two cases.

\textbf{Case 1:}  After thickening, suppose $\mathcal{\widetilde{A}}$ is still standard convex; that means both $\mathcal{\widetilde{A}}_+$ and $\mathcal{\widetilde{A}}_-$ are standard convex.  Since we can assume that after thickening $\mathcal{A}_{r-1}$ is still standard convex, this means that in order for $\textrm{slope}(\Gamma_{\partial N_{r-1}})$ to change, the holonomy of $\Gamma_{\mathcal{A}_{r-1}}$ must have changed.  But this will result in a change in $\textrm{slope}(\Gamma_{\partial N_r})$, since $\mathcal{A}_ {L_{r-1}}$ stays fixed and any change in holonomy of $\Gamma_{\mathcal{\widetilde{A}}_+}$ and $\Gamma_{\mathcal{\widetilde{A}}_-}$ cancels each other out and does not affect $\textrm{slope}(\Gamma_{\partial N_r})$.  Thus we would have a slope-changing thickening of $N_r$, which by hypothesis cannot occur.

\textbf{Case 2:}  After thickening, suppose $\mathcal{\widetilde{A}}$ is no longer standard convex.  Now note that there are no bypasses on the $\partial N(L_{r-1})$-edge of $\mathcal{\widetilde{A}}$; furthermore, any bypass for $\mathcal{\widetilde{A}}_+$ on the $\partial N_{r-1}$-edge must be cancelled out by a corresponding bypass for $\mathcal{\widetilde{A}}_-$ on the $\partial N_{r-1}$-edge as in part (a) of Figure \ref{fig:non-thickening2B}, so as not to induce a bypass on the $\partial N(L_{r-1})$-edge as in part (b) of the same figure.  But then again, in order for $\textrm{slope}(\Gamma_{\partial N_r})$ to remain constant, the holonomy of $\Gamma_{\mathcal{A}_{r-1}}$ must remain constant, and thus $\textrm{slope}(\Gamma_{\partial N_{r-1}})$ must also have remained constant, with just an increase in the number of dividing curves.

This proves the claim that $N_{r-1}$ does not thicken, and we therefore know that its boundary slope is $-(k_{r-1}+1)/(A_{r-1}k_{r-1}+B_{r-1})$.  Furthermore, we know the number of dividing curves is $2n$ where $n \geq n_{r-1}^{k_{r-1}}$.  Suppose, for contradiction, that $n > n_{r-1}^{k_{r-1}}$.  Then we know we can thicken $N_{r-1}$ to an $N_{r-1}^{k_{r-1}}$, and if we take a convex annulus from $\partial N_{r-1}$ to $\partial N_{r-1}^{k_{r-1}}$ whose boundaries are $(p_r,q_r)$ rulings, by the Imbalance Principle there must be bypasses on the $\partial N_{r-1}$-edge.  But these would induce bypasses off of $\infty'$ rulings on $N_r$, which by hypothesis cannot exist.  Thus $n = n_{r-1}^{k_{r-1}}$, and by a calculation as above we obtain that the intersection boundary slope of $N_r$ must be $-(k_{r}+1)/(A_{r}k_{r}+B_{r})$ for the integer $k_r=k_{r-1}/q_r$.
\end{proof}

Note the following inequality, which, among other things, shows that the boundary slopes of solid tori representing $\breve{K}_r$ that may fail to thicken are contained in the interval $[-1/B_r, -1/A_r)$.

\begin{equation}
\label{simple 1}
-\frac{1}{B_r} < -\frac{2}{A_r + B_r} < -\frac{3}{2A_r + B_r} < \cdots < -\frac{k_r + 1}{A_rk_r + B_r} < \cdots < -\frac{1}{A_r}
\end{equation}

To conclude this section, we have the following lemma:

\begin{lemma}
\label{widthKr}
$w(\breve{K}_r)=\overline{tb}(\breve{K}_r)$
\end{lemma}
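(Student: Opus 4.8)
The plan is to compute the contact width directly from the structure theorem for solid tori representing $\breve{K}_r$ that was just established in Lemma \ref{nonthickening inductive step}. Recall that $w(\breve{K}_r)$ is, by definition, the supremum of $1/\textrm{slope}(\Gamma_{\partial N_r})$ over all solid tori $N_r$ with convex boundary representing $\breve{K}_r$; in our sign conventions this is the supremum of the reciprocals of the boundary slopes. Since we have shown $\overline{tb}(\breve{K}_r) = A_r - B_r$ in Lemma \ref{tblemma}, and since the inequality $\overline{tb}(K) \le w(K)$ holds for every knot type, it suffices to prove $w(\breve{K}_r) \le A_r - B_r$, i.e., that no boundary slope of a convex solid torus representing $\breve{K}_r$ has reciprocal exceeding $A_r - B_r$.

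The key step is the following dichotomy for a given convex solid torus $N_r$ representing $\breve{K}_r$. By Lemma \ref{nonthickening inductive step}, $N_r$ can be thickened to some $N_r^k$, whose boundary slope is the intersection boundary slope $-(k+1)/(A_r k + B_r)$ in the $\mathcal{C}'$ framing. Either (a) $N_r^k$ itself thickens further, in which case we may pass to a strictly larger solid torus and repeat; or (b) $N_r^k$ fails to thicken, in which case its boundary slope is one of the slopes appearing in the chain of inequalities \ref{simple 1}, all of which lie in the interval $[-1/B_r, -1/A_r)$. In case (b), after converting from the $\mathcal{C}'$ framing to the $\mathcal{C}$ framing via left multiplication by $\left(\begin{array}{cc} 1 & A_r \\ 0 & 1 \end{array}\right)$ (Lemma \ref{Pr}), the boundary slope $-(k+1)/(A_rk+B_r)$ in $\mathcal{C}'$ becomes $-(k+1)/\bigl(A_rk+B_r - A_r(k+1)\bigr) = -(k+1)/(B_r - A_r) = (k+1)/(A_r-B_r)$ in $\mathcal{C}$, whose reciprocal is $(A_r - B_r)/(k+1) \le A_r - B_r$ since $k \ge 0$. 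Thus any solid torus that fails to thicken contributes a value of $1/\textrm{slope}(\Gamma_{\partial N_r})$ that is at most $\overline{tb}(\breve{K}_r)$, with equality exactly when $k = 0$, i.e., for the standard neighborhood of a maximal-$tb$ Legendrian representative. Since every $N_r$ either thickens indefinitely (but its boundary slopes are then bounded by the limiting slope $-1/A_r$ in $\mathcal{C}'$, equivalently $-1/(B_r - A_r)$ in $\mathcal{C}$, whose reciprocal is $B_r - A_r = -\overline{tb} < 0 < \overline{tb}$, so these contribute nothing to the supremum) or else stabilizes at some non-thickening $N_r^k$, the supremum over all $N_r$ is realized in case (b) with $k=0$, giving $w(\breve{K}_r) = A_r - B_r = \overline{tb}(\breve{K}_r)$.

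The main obstacle I anticipate is making the ``thickens indefinitely'' branch fully rigorous: one must argue that the nested union of such a tower of solid tori does not produce a convex boundary with slope whose reciprocal exceeds $A_r - B_r$, and in particular that the boundary slopes in the $\mathcal{C}'$ framing, which are pushed by repeated thickening toward $-1/A_r$ from below along the chain \ref{simple 1}, never cross into the regime that would, after the change of framing, yield a large positive reciprocal. This is precisely the content of the inequality \ref{simple 1} together with the observation that $-1/A_r < 0$, so all these slopes have negative reciprocal in $\mathcal{C}$ after the coordinate change; hence the only positive contributions to the width come from the non-thickening representatives, whose slopes are bounded as computed above. I would also need the standard fact that a solid torus whose boundary slope has reciprocal equal to $\overline{tb}$ is a standard neighborhood of a Legendrian knot at maximal $tb$, which follows from the classification of tight contact structures on solid tori; this confirms that the supremum defining $w$ is attained and equals $\overline{tb}(\breve{K}_r)$.
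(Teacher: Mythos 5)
There is a genuine gap here, and it is precisely the step that constitutes the actual content of the paper's (very short) proof. The contact width is the supremum of $1/\textrm{slope}(\Gamma_{\partial N_r})$ over the \emph{original} solid tori $N_r$ themselves, measured in $\mathcal{C}$; but your dichotomy only computes the boundary slopes of the thickenings $N_r^k$ (and of non-thickening tori), and never bounds $\textrm{slope}(\Gamma_{\partial N_r})$ for an arbitrary $N_r$. Note that if $\textrm{slope}(\Gamma_{\partial N_r})=s'$ in $\mathcal{C}'$, then in $\mathcal{C}$ one has $1/\textrm{slope}(\Gamma_{\partial N_r}) = A_r + 1/s'$, which exceeds $A_r - B_r$ exactly when $s' \notin [-1/B_r,0)$ (for instance a positive slope, or $\infty'$). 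A torus with such a ``bad'' slope would still thicken to some $N_r^k$ by Lemma \ref{nonthickening inductive step}, so your case analysis never excludes it, and the assertion that ``the supremum over all $N_r$ is realized in case (b) with $k=0$'' does not follow from what you have established. The missing ingredient is the tightness constraint relating the slope of an inner convex torus to that of the solid torus containing it: since $N_r$ sits inside a solid torus of boundary slope $-(k+1)/(A_rk+B_r) \in [-1/B_r,-1/A_r)$ (by inequality \ref{simple 1}), avoiding an overtwisted disc --- equivalently, avoiding a convex torus with meridional dividing curves between $\partial N_r$ and the outer boundary --- forces $\textrm{slope}(\Gamma_{\partial N_r}) \in [-1/B_r,0)$ in $\mathcal{C}'$. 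Only with this in hand does the change of framing give $1/\textrm{slope}(\Gamma_{\partial N_r}) \le A_r - B_r$ for every $N_r$, hence $w(\breve{K}_r) \le A_r - B_r = \overline{tb}(\breve{K}_r)$ by Lemma \ref{tblemma}, the reverse inequality being automatic. This ``to prevent overtwisting'' step is exactly how the paper argues, and without it the lemma is not proved.

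Two smaller points. First, the ``thickens indefinitely'' branch is a red herring: Lemma \ref{nonthickening inductive step} already provides, for each $N_r$, a single thickening to some $N_r^k$ with slope in $[-1/B_r,-1/A_r)$, and whether that torus thickens further is irrelevant once the slope constraint on $\partial N_r$ itself is established. Second, your framing conversion of the limiting slope is off: $-1/A_r$ in $\mathcal{C}'$ has meridian coordinate $A_r + A_r(-1) = 0$ in $\mathcal{C}$, i.e.\ it becomes the longitude $\infty$ with reciprocal $0$, not $-1/(B_r-A_r)$; this does not affect your intended conclusion there, but it signals that the coordinate change needs more care.
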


\begin{proof}
Using the inequality above, it suffices to show that any solid torus $N_r$ representing $\breve{K}_r$ can be thickened to a solid torus with boundary slope $-(k_r + 1)/(A_rk_r + B_r)$ for some nonnegative integer $k_r$, for then to prevent overtwisting it would have to be the case that $\textrm{slope}(\Gamma_{\partial N_r}) \in [-1/B_r,0)$.  But by the above lemma this is true. 
\end{proof}

\section{Legendrian simplicity of $\breve{K}_r$}

We now use the strategy outlined in \S 3 to prove Theorem \ref{main theorem}.  Since Theorem \ref{main theorem} is true for positive torus knots \cite{[EH2]}, we can inductively assume that it holds for $\breve{K}_{r-1}$.  We then prove it true for $\breve{K}_r$.  The proof will parallel the proof from \cite{[EH1]} that $K$ being simple and satisfying the UTP guarantees simplicity of cablings for cabling fractions that are greater than the contact width.  However, in our case $\breve{K}_{r-1}$ may not satisfy the UTP, so we will need appropriate modifications for our proof. 

\begin{proof}
We begin by showing that if $L_r$ and $L_r^\prime$ have maximal $\overline{tb}(\breve{K}_r)=A_r-B_r$, then they are Legendrian isotopic.  Now $\overline{t}(\breve{K}_r)=-B_r < 0$, so we can assume that both $L_r$ and $L_r^\prime$ exist as Legendrian rulings on convex tori $\partial N_{r-1}$ and $\partial N_{r-1}^\prime$.  Let $\textrm{slope}(\Gamma_{\partial N_{r-1}})=-\frac{a}{b}$ be an intersection boundary slope where $a,b>0$.  Then $-a/b \geq -1/B_{r-1}$, and we have $b \geq aB_{r-1}$.  But since $t(L_r)=-B_r$, we also have $ap_r + bq_r = B_r$.  Combining this equality and inequality we obtain $B_r \geq ap_r + aq_rB_{r-1}=aB_r$, which implies $a=1$ and $b=B_{r-1}$.  Hence, we can assume that $L_r$ lies on a convex torus with boundary slope $-1/B_{r-1}$, and similarly for $L_{r}^\prime$.

Now by Proposition 4.3 in \cite{[H]}, each solid torus with boundary slope $-1/B_{r-1}$ is contact isotopic to the standard neighborhood of a Legendrian representative of $K_{r-1}$ with $t(L_{r-1})=-B_{r-1}$; both $L_{r}$ and $L_{r}^\prime$ are Legendrian rulings on such a boundary torus.  But inductively there is only one such Legendrian $L_{r-1}$ at maximal $\overline{t}(K_{r-1})=-B_{r-1}$.  Thus, as in the proof of Lemma 3.4 in \cite{[EH1]}, we may assume that $L_{r}$ and $L_{r}^\prime$ are Legendrian rulings on the same boundary torus, and hence Legendrian isotopic via the rulings.

We now show that if $tb(L_r) < \overline{tb}(\breve{K}_r)$ then $L_r$ destabilizes using a bypass.  To this end, we note that since $q_r > 1$, we have

\begin{equation}
\label{simple 2}
 -\frac{1}{(A_r/q_r)} < -\frac{2}{A_r + B_r}
\end{equation} 

We first suppose that $t(L_r) = -m$, where $B_r < m \leq (A_r/q_r)$ (note that $B_r < (A_r/q_r)$ for $r>1$).  Then $N(L_r)$ has boundary slope $-1/m \leq -1/(A_r/q_r)$, and this, combined with Lemma \ref{nonthickening inductive step} and inequalities \ref{simple 1} and \ref{simple 2}, allows us to conclude that $N(L_r)$ can be thickened to a solid torus $N_r$ with intersection boundary slope $-1/B_r$.  Then an $\infty^\prime$ ruling on $N(L_r)$ can be destabilized using a bypass on a convex annulus joining the two tori.

Now suppose alternatively that $m > (A_r/q_r)$.  In this case, we look at $L_r$ as a $(p_r,q_r)$ Legendrian ruling on the convex boundary of a solid torus $N_{r-1}$ with boundary slope $s$.  We may assume that $L_r$ intersects the dividing set efficiently, for otherwise $L_r$ immediately destabilizes.  Note first that if $L_r^\prime$ is a $(p_r,q_r)$ ruling on a solid torus with intersection boundary slope $-1/A_{r-1}$, then $t(L_r^\prime)=-(A_r/q_r)$.  In light of this, note that by Lemmas \ref{nonthickening inductive step} and \ref{widthKr} and inequality \ref{simple 1}, as well as Lemmas 3.15 and 3.16 in \cite{[EH2]}, we must have $N_{r-1}$ either containing a solid torus with intersection boundary slope $-1/A_{r-1}$ (if $s <  -1/A_{r-1}$), or $N_{r-1}$ must thicken to a solid torus of intersection boundary slope $-1/A_{r-1}$ (if $s > -1/A_{r-1}$).  Either way, we can connect $L_r$ to an $L_r^\prime$ via a convex annulus and destabilize $L_r$ using a bypass.
\end{proof}

This proves Theorem \ref{main theorem}; a change of coordinates from $\mathcal{C}'$ to $\mathcal{C}$ then yields Corollary \ref{fractionslargerthanwidth}.

\section{Legendrian simple cablings of $\breve{K}_r$ that satisfy the UTP}

We now prove Theorem \ref{main theorem 3}:

\begin{proof}
Recall that we are given $q_{r+1}/p_{r+1} \in (-1/A_r,0)$.  Note first that in this case $P_{r+1} = p_{r+1}+q_{r+1}A_r < 0$ in the $\mathcal{C}$ framing.  Moreover, since $w(\breve{K}_r)=A_r-B_r > 0$, we have that $P_{r+1}/q_{r+1} < w(\breve{K}_r)$.  Our proof for this case will thus parallel the proof in \cite{[EH1]} that $K$ being Legendrian simple and satisfying the UTP, along with $P/q < w(K)$, guarantees that the $(P,q)$ cabling is also Legendrian simple and satisfies the UTP.  In our case, $\breve{K}_r$ does not necessarily satisfy the UTP, and thus we will need appropriate modifications for our proof.

The proof will require five steps:

\begin{itemize}
	\item[1.] Show that $\overline{tb}(K_{r+1})=A_{r+1}$.
	\item[2.] Show that $K_{r+1}$ satisfies the UTP.
	\item[3.] Calculate $r(L_{r+1})$ at $\overline{tb}$ and show that Legendrian isotopy classes at $\overline{tb}$ are determined by their rotation numbers.
	\item[4.] Show that if $tb(L_{r+1}) < \overline{tb}$, then $L_{r+1}$ destabilizes.
	\item[5.] Show that if $L_{r+1}$ is in a valley of the Legendrian mountain range (ie, $(r(L_{r+1})\pm 1,tb(L_{r+1})+ 1)$ have images in the mountain range, but $(r(L_{r+1}),tb(L_{r+1})+2)$ does not), then $L_{r+1}$ can destabilize both positively and negatively. 
\end{itemize}

\textbf{Step 1:}  Our analysis in the first two steps will draw heavily from ideas in the proof of Theorem 1.2 in \cite{[EH1]} that negative torus knots satisfy the UTP.  We first examine representatives of $K_{r+1}$ at $\overline{tb}$.  Since there exists a convex torus representing $\breve{K}_r$ with Legendrian divides that are $(p_{r+1},q_{r+1})$ cablings (inside of the solid torus representing $\breve{K}_r$ with $\textrm{slope}(\Gamma)=-1/A_r$) we know that $\overline{tb}(K_{r+1}) \geq P_{r+1}q_{r+1}=A_{r+1}$.  To show that $\overline{tb}(K_{r+1}) = A_{r+1}$, we show that $\overline{t}(K_{r+1})=0$ by showing that the contact width $w(K_{r+1},\mathcal{C}')=0$, since this will yield $\overline{tb}(K_{r+1}) \leq w(K_{r+1})=A_{r+1}$.  So suppose, for contradiction, that some $N_{r+1}$ has convex boundary with $\textrm{slope}(\Gamma_{\partial N_{r+1}})=s > 0$, as measured in the $\mathcal{C}'$ framing, and two dividing curves.  After shrinking $N_{r+1}$ if necessary, we may assume that $s$ is a large positive integer.  Then let $\mathcal{A}$ be a convex annulus from $\partial N_{r+1}$ to itself having boundary curves with slope $\infty'$.  Taking a neighborhood of $N_{r+1} \cup \mathcal{A}$ yields a thickened torus $R$ with boundary tori $T_1$ and $T_2$, arranged so that $T_1$ is inside the solid torus $N_r$ representing $\breve{K}_r$ bounded by $T_2$.

Now there are no boundary parallel dividing curves on $\mathcal{A}$, for otherwise, we could pass through the bypass and increase $s$ to $\infty'$, yielding excessive twisting inside $N_{r+1}$.  Hence $\mathcal{A}$ is in standard form, and consists of two parallel nonseparating arcs.  We now choose a new framing $\mathcal{C}''$ for $N_r$ where $(p_{r+1},q_{r+1}) \mapsto (0,1)$; then choose $(p'',q'') \mapsto (1,0)$ so that $p''q_{r+1}-q''p_{r+1}=1$ and such that $\textrm{slope}(\Gamma_{T_1})=-s$ and $\textrm{slope}(\Gamma_{T_2})=1$.  As mentioned in the third paragraph of the proof of Theorem 1.2 in \cite{[EH1]}, this is possible since $\Gamma_{T_1}$ is obtained from $\Gamma_{T_2}$ by $s+1$ right-handed Dehn twists.  Then note that in the $\mathcal{C'}$ framing, we have that $q_{r+1}/p_{r+1} > \textrm{slope}(\Gamma_{T_2})=(q''+q_{r+1})/(p''+p_{r+1}) > q''/p''$, and $q_{r+1}/p_{r+1}$ and $q''/p''$ are connected by an arc in the Farey tessellation of the hyperbolic disc (see section 3.4.3 in \cite{[H]}).  Thus, since $-1/A_r$ is connected by an arc to $0/1$ in the Farey tessellation, we must have that $(q''+q_{r+1})/(p''+p_{r+1}) > -1/A_r$.  Thus we can thicken $N_r$ to a standard neighorhood with $\textrm{slope}(\Gamma)=-1/A_r$.  Then, just as in Claim 4.2 in \cite{[EH1]}, we have the following:

\begin{itemize}
\item[(i)] inside $R$ there exists a convex torus parallel to $T_i$ with slope $q_{r+1}/p_{r+1}$; 
\item[(ii)] $R$ can thus be decomposed into two layered {\em basic slices}; 
\item[(iii)] the tight contact structure on $R$ must have {\em mixing of sign} in the Poincar$\acute{\textrm{e}}$ duals of the relative half-Euler classes for the layered basic slices; 
\item[(iv)] this mixing of sign cannot happen inside the universally tight standard neighborhood with $\textrm{slope}(\Gamma)=-1/A_r$.
\end{itemize}

This contradicts $s > 0$.  So $\overline{tb}(K_{r+1})=P_{r+1}q_{r+1}=A_{r+1}$.

\textbf{Step 2:}  Here we show that any $N_{r+1}$ can be thickened to a standard neighborhood of $L_{r+1}$ with $t(L_{r+1})=0$.  So suppose that $N_{r+1}$ has convex boundary with $\textrm{slope}(\Gamma_{\partial N_{r+1}})=s$, as measured in the $\mathcal{C}'$ framing, where $-\infty' < s < 0$.  Construct $R$ as in Step 1 above, and look at the convex annulus $\mathcal{A}$, which in this case may not be standard convex.  If all dividing curves on $\mathcal{A}$ are boundary parallel arcs, then $N_{r+1}$ can be thickened to have boundary slope $\infty'$.  On the other hand, if there are nonseparating dividing curves on $\mathcal{A}$ after going through bypasses, then the resulting $T_2$ will have negative boundary slope in the $\mathcal{C}''$ framing, and we can thicken $N_r$ to obtain a convex torus outside of $R$ on the $T_2$-side with slope $q_{r+1}/p_{r+1}$ in the $\mathcal{C}'$ framing, since $q_{r+1}/p_{r+1} > -1/A_r$ and thickening can occur.  Then using the Imbalance Principle we can thicken $N_{r+1}$ to have boundary slope $\infty'$.

It remains to show that we can achieve just two dividing curves for this $N_{r+1}$.  Note that $N_{r+1}$ is contained in a thickened torus $R$ representing $\breve{K}_r$ with $\partial R = T_2-T_1$ and where the dividing curves on $T_i$ have slope $q_{r+1}/p_{r+1}$.  The key now is that since $q_{r+1}/p_{r+1} \in (-1/A_r,0)$, there is twisting on both sides of $R$.  We can thus reduce the number of dividing curves on $N_{r+1}$ by either finding bypasses in $R\backslash N_{r+1}$ or by finding bypasses along $T_1$ or $T_2$ that can be extended into $R$, as in the proofs of Claims 4.1 and 4.3 in \cite{[EH1]}.

\textbf{Step 3:}  We now show that the $L_{r+1}$ at $\overline{tb}$ are distinguished by their rotation numbers.  To do this, we first note that since $q_{r+1}/p_{r+1} > -1/A_r$, there exists an integer $n \geq A_r$ with $-1/A_r \leq -1/n < q_{r+1}/p_{r+1} < -1/(n+1)$.  Changing to the standard $\mathcal{C}$ framing yields $-1/(n-A_r) < q_{r+1}/P_{r+1} < -1/((n+1)-A_r)$.  This thickened torus bounded by the tori with slopes $-1/(n-A_r)$ and $-1/((n+1)-A_r)$ is a universally tight {\em basic slice} in the sense of \cite{[H]}, and thus by an argument identical to that in Lemma 3.8 in \cite{[EH1]} we have that the set of rotation numbers achieved by $L_{r+1}$ at $\overline{tb}$ is:

\begin{equation}
\label{rlr+1}
r(L_{r+1}) \in \left\{\pm (P_{r+1} + q_{r+1}(n-A_r+r(L_r))) | tb(L_r)=A_r-n\right\}
\end{equation}

Changing to $p_j$'s and $q_j$'s yields:

\begin{equation}
r(L_{r+1}) \in \left\{\pm (p_{r+1} + nq_{r+1}+q_{r+1}r(L_r)) | tb(L_r)=A_r-n\right\}
\end{equation}

Now we know from the Legendrian classification of $\breve{K}_r$ that if $tb(L_r)=A_r-n$, then 

\begin{equation}
r(L_r) \in \left\{-(n-B_r), -(n-B_r)+2, \cdots, (n-B_r)-2,(n-B_r)\right\}
\end{equation}

Plugging these values of $r(L_r)$ just into the values $r(L_{r+1})=p_{r+1} + nq_{r+1}+q_{r+1}r(L_r)$ yields $r(L_{r+1})$ that begin on the left at $B_{r+1} < 0$, and then increase by $2q_{r+1}$, ending at $p_{r+1} + nq_{r+1}+q_{r+1}(n-B_r)$.  Reflecting these values across the $r=0$ axis yields the $r(L_{r+1})=-(p_{r+1} + nq_{r+1}+q_{r+1}r(L_r))$; these two distributions interleave to form one total distribution of $r$-values.  Thus, if we define $s=-p_{r+1}-nq_{r+1}$ we have that the distribution of $r(L_{r+1})$ when $\overline{tb}(L_{r+1})=A_{r+1}$ is as follows:


\begin{equation}
B_{r+1} < B_{r+1}+2s < B_{r+1}+2q_{r+1} <  \cdots < -(B_{r+1}+2q_{r+1}) < -(B_{r+1}+2s) < -B_{r+1}\nonumber
\end{equation}

Note that $q_{r+1} > s > 0$.  Algorithmically, the distribution of values for $r(L_{r+1})$ is achieved as follows:  begin on the left at $B_{r+1}$, and then move right to the next $r$-value by alternating lengths of $2s$ and $2(q_{r+1}-s)$, until one reaches $-B_{r+1}$.  As mentioned in \cite{[EH1]}, a way to see where these rotation numbers come from is noting that to each $L_r$ with $tb(L_r)=A_r-n$, there corresponds two $L_{r+1}^{\pm}$ at $\overline{tb}$, where $r(L_{r+1}^{\pm})=q_{r+1}r(L_r)\pm s$.  $L_{r+1}^{\pm}$ is obtained by removing a standard neighborhood of $N(S_{\pm}(L_r))$ from $N(L_r)$ and taking a Legendrian divide on a torus with slope $q_{r+1}/p_{r+1}$ inside $N(L_r) \backslash N(S_{\pm}(L_r))$.  Here $S_+$ indicates positive stabilization and $S_-$ means negative stabilization.  As a consequence, if $L_{r+1}$ and $L'_{r+1}$ are both at $\overline{tb}$ and have the same rotation number, then they must exist in basic slices that are associated to $L_r$ and $L'_r$ at $tb=A_r-n$ and having the same rotation number, as well as the same parity of stabilization for $L_r$ and $L'_r$.  These basic slices are thus contact isotopic since $\breve{K}_r$ is Legendrian simple, yielding a Legendrian isotopy from $L_{r+1}$ to $L'_{r+1}$ using a linearly foliated torus -- see Lemma 3.17 in \cite{[EH2]}.

\textbf{Step 4:}  We now show that if $tb(L_{r+1}) < \overline{tb}$, then $L_{r+1}$ destabilizes.  To see this, note that since $\overline{t}(K_{r+1})=0$, if $L_{r+1}$ has $tb(L_{r+1}) < \overline{tb}$, we know that $L_{r+1}$ is a Legendrian ruling on the boundary of a solid torus $N_r$ and that $N_r$ either contains a solid torus with $\textrm{slope}(\Gamma)=q_{r+1}/p_{r+1}$ or can be thickened to a solid torus with such a boundary slope, since $q_{r+1}/p_{r+1} > -1/A_r$.  Thus $L_{r+1}$ will destabilize by the Imbalance Principle.

\textbf{Step 5:}  We now show that if $L^v_{r+1}$ is in a valley of the Legendrian mountain range, that is $(r(L^v_{r+1})\pm 1,tb(L^v_{r+1}) + 1)$ have images in the mountain range, but $(r(L^v_{r+1}),tb(L^v_{r+1})+2)$ does not, then there are two Legendrian representatives of $K_{r+1}$ at $\overline{tb}$, namely the two closest peaks $L^+_{r+1}$ and $L^-_{r+1}$, such that $L^v_{r+1}=S^m_+(L^-_{r+1})=S^m_-(L^+_{r+1})$ for some $m>0$.  

To see this, first note that from the distribution of rotation numbers at $\overline{tb}$, there are two types of valleys, those with depth $s$, and those with depth $q_{r+1}-s$.  We first consider valleys of depth $s$.  Such a valley falls between two peaks represented by Legendrian knots at $\overline{tb}$, where $r(L^+_{r+1})=q_{r+1}r(L_r)+s$ and $r(L^-_{r+1})=q_{r+1}r(L_r)-s$.  So $r(L^v_{r+1})=q_{r+1}r(L_r)$ and $t(L^v_{r+1})=p_{r+1}+nq_{r+1}$; hence $L^v_{r+1}$ is a $(p_{r+1},q_{r+1})$ ruling on a standard neighborhood of $L_r$ where $t(L_r)=-n$.  Then we can stabilize $L_r$ both positively and negatively to obtain two different basic slices having boundary slopes $-1/n$ and $-1/(n+1)$.  In the one, there will be a boundary parallel torus with $t(L_{r+1})=0$ and a convex annulus that results in $s$ positive destabilizations of $L^v_{r+1}$; in the other there will be a convex annulus to a similar torus that results in $s$ negative destabilizations of $L^v_{r+1}$.

Now consider a valley of depth $q_{r+1}-s$.  Then such a valley falls between two peaks represented by $r(L^+_{r+1})$ and $r(L^-_{r+1})$ where $r(L^+_{r+1})=q_{r+1}r(L_r) - s$.  Thus $r(L^v_{r+1})=q_{r+1}(r(L_r)-1)$ and $t(L^v_{r+1})=-p_{r+1}-(n+1)q_{r+1}$; hence $L^v_{r+1}$ is a $(p_{r+1},q_{r+1})$ ruling on a standard neighborhood of $S_-(L_r)$.  Now note that if $r(L_r)=-(n-B_r)$, that would imply that $r(L^+_{r+1})=B_{r+1}$, which is not true.  Thus a consideration of the Legendrian mountain range for $\breve{K}_r$ allows us to conclude that $S_-(L_r)$ destabilizes both positively and negatively to obtain two different basic slices having boundary slopes $-1/n$ and $-1/(n+1)$.  In the one, there will be a boundary parallel torus with $t(L_{r+1})=0$ and a convex annulus that results in $q_{r+1} - s$ positive destabilizations of $L^v_{r+1}$; in the other there will be a convex annulus to a similar torus that results in $q_{r+1} - s$ negative destabilizations of $L^v_{r+1}$. 
\end{proof}

This proves Theorem \ref{main theorem 3}; a change of coordinates from $\mathcal{C}'$ to $\mathcal{C}$ then yields Corollary \ref{negativecablings}.

\bigskip\bigskip
\small{\scshape{University at Buffalo, Buffalo, NY}\\
\indent{\em E-mail address}:  \ttfamily{djl2@buffalo.edu}}\\\\\\

\end{document}